\documentclass[a4paper,reqno,oneside]{article} 
%

\include{mathcommands.extratex}

\begin{document}


\title{A classification of overlapping clustering schemes for hypergraphs}
\author{Vilhelm Agdur}
\maketitle
\begin{abstract}
    Community detection in graphs is a problem that is likely to be relevant whenever network data appears, and consequently the problem has received much attention with many different methods and algorithms applied. However, many of these methods are hard to study theoretically, and they optimise for somewhat different goals. A general and rigorous account of the problem and possible methods remains elusive.

    We study the problem of finding overlapping clusterings of hypergraphs, continuing the line of research started by Carlsson and Mémoli~\cite{carlsson_mémoli_2013} of classifying clustering schemes as functors. We extend their notion of representability to the overlapping case, showing that any representable overlapping clustering scheme is excisive and functorial, and any excisive and functorial clustering scheme is isomorphic to a representable clustering scheme.

    We also note that, for simple graphs, any representable clustering scheme is computable in polynomial time on graphs of bounded expansion, with an exponent determined by the maximum independence number of a graph in the representing set. This result also applies to non-overlapping representable clustering schemes, and so may be of independent interest.
\end{abstract}

\section{Introduction}

The problem of inferring community structure from a graph is one that appears in many different guises in different research areas. It is useful for everything from assigning research papers to subfields of physics~\cite{community_structure_physrev}, assessing the development of nano-medicine by studying patent cocitation networks~\cite{canadian_nanotech_patents}, studying brain function via the community structure of the connectome~\cite{connectome_analysis}, understanding how different bird species spread various plant seeds~\cite{bird_seed_networks}, to studying hate speech on Twitter~\cite{twitter_hate_speech}.

This problem is related to a broader problem of clustering, where the data may take the form of dissimilarity metric on items. In this guise, the study of clustering has a long history, with an early example being the book by Jardine and Sibson~\cite{Jardine_Sibson_1971} on mathematical taxonomy from 1971. In fact, reading the introduction to that book already prefigures many of the issues discussed later in the literature, such as the difficulty of determining what exactly is meant by ``community structure'' or ``clustering'', and the lack of distinction between algorithms for finding partitions and the definitions of the structures we attempt to find.

Unfortunately, the development since then has seen the development of many directions of research and varying methods, without much contact between the different approaches. This has lead to a confusion of various methods for community detection on graphs, where there is rarely any clear theoretical justification for preferring one method over another.

In the literature on community detection on graphs, the most common approach is to define a quality function on partitions, and then treat community detection as an optimization problem.

One of the most popular methods in practice, namely modularity optimization~\cite{NewmanGirvan}, is of this type -- it is the method used in all the examples given in the first paragraph. Other noteworthy methods in broadly the same strand of research are the flow-based infomap method motivated by information theory~\cite{infomap}, more statistically sound methods assuming a generative model~\cite{Peixoto_clustering_method}, novel graph neural network methods~\cite{node2vec_clustering}, but many more exist~\cite{multidisciplinary_review_methods}.

What the methods in this research direction have in common is that they are generally black boxes  -- they do not give you any interpretable way of answering the question of \emph{why} two vertices were put in the same part. They are also generally hard to study in a rigorous mathematical way, resulting in few theoretical guarantees for their behaviour, and computationally, the optimization problem is normally not tractable.

For example, modularity suffers from a resolution limit, wherein it cannot ``see'' things happening in subgraphs that are small compared to the total graph, resulting in very unexpected behaviour~\cite{resolution_limit}. In Figure~\ref{fig:resolution_limit} we see one illustration of this phenomenon -- one would hope that what happens in one connected component cannot affect the other, but the resolution limit results in this undesirable behaviour.

\begin{figure}
    \centering
    \subfloat[\centering A graph $G$, with vertices coloured according to the modularity-maximising partition]{{\includegraphics[height=5cm]{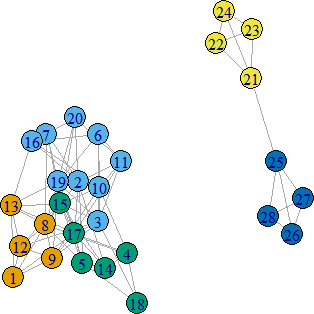} }}%
    \qquad
    \subfloat[\centering $G$ with a single edge added, highlighted in red, and its new optimal partition]{{\includegraphics[height=5cm]{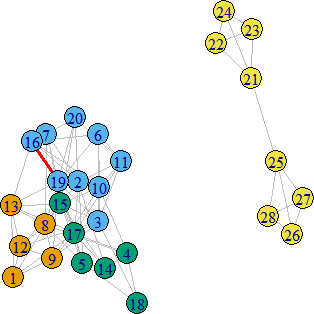} }}%
    \caption{An illustration of the resolution limit issue with modularity.}
    \label{fig:resolution_limit}
\end{figure}


In the literature on clustering based on dissimilarity measures, there has been much more attention to simpler methods, developing from the earliest studies of single-linkage clustering to more sophisticated versions of the same idea. In single-linkage clustering, we simply declare two objects to belong to the same part if they are sufficiently close, which leads to a very tractable and easily-analyzed method. However, it struggles with ``chaining'' effects, where two objects that are far apart are joined together by a chain of objects that are all close to each other.

This class of methods has received sophisticated theoretical analysis beginning with the work of Carlsson and Mémoli~\cite{carlsson_mémoli_2013}, casting the problem of determining possible clustering methods in category-theoretical language as understanding properties of functors from the category of finite metric spaces to the category of sets with partitions. This line of research has also been extended into settings that cannot be represented as finite metric spaces, such as directed graphs~\cite{Carlsson_Memoli_Segarra_2021}, and the original work of Carlsson and Mémoli straightforwardly extends into the hypergraph setting without any major modifications.

In this line of research, we end up with three related concepts to describe our clustering methods:
\begin{enumerate}
    \item \emph{Functoriality} is essentially a form of monotonicity under addition of edges or vertices to the graph. Adding a new edge or new vertex should only ever make communities more closely connected, it should never split them.
    \item \emph{Excisiveness} is a very strict version of requiring that there be no resolution limit -- instead we require that the method be idempotent on the parts it has found. That is, if we take some graph $G$, cluster it, and then look at the subgraph induced by any part, that graph has to be clustered into just a single large component, so we never discover new features at smaller scales.
    \item \emph{Representability} is a type of explainability of the clustering method. A representable clustering method $\Phi_\Rset$ is determined by a set $\Rset$ of graphs, and the rule that two vertices of a graph $G$ will be in the same part if their neighbourhood looks like something in $\Rset$. This also gives us, for simple graphs, an algorithm to actually compute the partitioning of a graph that runs in polynomial time for each fixed $\Rset$.
\end{enumerate}

Sometimes additional concepts are studied, but the basic picture is that we expect a clustering scheme to be representable if and only if it is functorial and excisive -- this is the well-known picture for simple graphs.

We extend these results into the setting of finding \emph{overlapping} partitions of hypergraphs, where vertices are allowed to belong to more than one part. This direction has already begun to be explored in the setting of finite metric spaces~\cite{Culbertson_Guralnik_Hansen_Stiller_2016,Culbertson_Guralnik_Stiller_2018,Shiebler_2020}, but their results do not immediately apply to hypergraphs, since a finite metric space cannot model the full combinatorial structure of a hypergraph. They also do not extend the notion of representability to the overlapping setting.

A similar idea to ours, of finding overlapping communities by looking for certain subgraphs (``motifs'' in their language), has recently seen some attention in the applied literature as well~\cite{Chen_Tsourakakis_2024,Hajibabaei_Seydi_Koochari_2024}.

We very nearly get the same classification result as for simple graphs, except that one direction of the equivalence weakens to become only an isomorphism.

Additionally, we also note some results on the algorithmic problem of computing a representable clustering scheme, finding that some well-known results show that for graph classes of bounded expansion, we can compute the clustering in polynomial time, with exponent equal to the maximum independence number of the graphs in the representing set.

\begin{theorem}[Main results]
    Any representable clustering scheme is excisive and functorial. Any excisive and functorial clustering scheme is isomorphic to a representable clustering scheme. 
    
    Further, for any class $\mathcal{C}$ of simple graphs of bounded expansion and any finitely representable clustering scheme $\Pi_{\Rset,k}$, there exists an algorithm that computes $\Pi_{\Rset,k}(G)$ in time $O\left(\abs{V(G)}^{\alpha(\Rset)}\right)$ for any $G \in \mathcal{C}$, where $\alpha(\Rset)$ is the maximum independence number of any graph in the representing set $\Rset$.
\end{theorem}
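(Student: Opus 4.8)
The plan is to prove the three assertions of the theorem essentially independently, since they concern rather different aspects of the theory.

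For the first assertion --- that any representable clustering scheme is excisive and functorial --- I would argue directly from the definition of representability. Recall that in a representable scheme $\Phi_{\Rset}$, two vertices $u,v$ land in a common part precisely when some local configuration around them matches a graph in $\Rset$ (in the overlapping setting, one should expect the parts to be the maximal sets of vertices that are pairwise ``witnessed'' by $\Rset$, or something along those lines). \emph{Functoriality} should follow because adding edges or vertices to $G$ can only create new matches against $\Rset$, never destroy existing ones --- so the relation defining co-membership is monotone under the morphisms of the hypergraph category, and one checks that the induced map on partitions is well-defined and respects composition and identities. \emph{Excisiveness} should follow by observing that if $P$ is a part found in $G$, then the restriction $G[P]$ still contains all the witnessing configurations that placed the vertices of $P$ together, so re-running the scheme on $G[P]$ returns $P$ itself as the unique part; the only subtlety is to make sure no \emph{finer} structure can appear, which is exactly where the definition of the parts as maximal witnessed sets is used.

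For the second assertion --- that any excisive, functorial scheme is isomorphic to a representable one --- I would mimic the Carlsson--Mémoli reconstruction: given an excisive functorial $\Phi$, define the representing set $\Rset$ to be (isomorphism classes of) those hypergraphs $H$ together with a distinguished vertex set such that $\Phi(H)$ places that set in a common part, i.e. let $\Rset$ record the ``minimal witnesses'' of co-membership under $\Phi$. One then shows $\Phi_{\Rset} \cong \Phi$. The $\subseteq$-type inclusion (every part of $\Phi_{\Rset}(G)$ is contained in a part of $\Phi(G)$) uses functoriality: each witness embeds into $G$, and functoriality transports the co-membership. The reverse inclusion uses excisiveness: a part $P$ of $\Phi(G)$, viewed via $G[P]$, is itself a witness, hence gets recognized by $\Rset$. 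The reason this is only an \emph{isomorphism} rather than an equality --- the weakening mentioned in the introduction --- is presumably that in the overlapping setting the natural output is a set of parts that may need to be relabeled or that $\Rset$ as constructed is ``too large'' and represents the same scheme up to the canonical identification of overlapping-partition objects; I would make this precise by exhibiting the natural isomorphism explicitly on objects and checking naturality. I expect \textbf{this second assertion to be the main obstacle}, precisely because the overlapping structure breaks the clean equality and one must carefully set up the target category of overlapping partitions and the right notion of isomorphism of clustering schemes so that the reconstruction goes through.

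For the third assertion --- the algorithmic bound --- the plan is to reduce the computation of $\Pi_{\Rset,k}(G)$ to a bounded number of subgraph-isomorphism queries: for each $R \in \Rset$ and each candidate witnessed set, one must detect whether $R$ (or the relevant local pattern) embeds into $G$. The key external input is the result of Nešetřil--Ossona de Mendez that on any class of bounded expansion, subgraph isomorphism for a fixed pattern $H$ can be decided in time $O(\abs{V(G)})$, and more to the point, the dependence needed here is that counting or listing occurrences of $H$ as a subgraph costs roughly $O(\abs{V(G)}^{\alpha(H)})$ where $\alpha(H)$ is the independence number of $H$ --- intuitively because one guesses an independent set of $H$'s vertices and the bounded-expansion structure (low treedepth colourings) pins down the rest in linear time. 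Taking the maximum over the finitely many $R \in \Rset$ gives the exponent $\alpha(\Rset)$, and assembling the witnessed sets into the overlapping partition is a lower-order post-processing step. The main thing to be careful about here is stating precisely which ``pattern'' must be searched for --- it is the representing graph $R$ with its distinguished vertices, and one must confirm that the relevant complexity parameter really is the independence number of $R$ and not some larger quantity --- after which the bound is essentially a citation.
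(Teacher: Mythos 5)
Your three-part strategy matches the paper's, and your third part (the algorithmic bound) is essentially the paper's argument: a linear-time listing algorithm for isomorphs of a fixed pattern on bounded-expansion classes, combined with a bound of $O\left(\abs{V(G)}^{\alpha(H)}\right)$ on the number of isomorphs of $H$ in a degenerate graph (the paper gets the latter from a count over acyclic orientations with $t$ sinks), with the line-graph and connected-components steps as lower-order post-processing. But the first two parts have concrete gaps traceable to the fact that you are guessing at the definition of representability. In this paper a representable scheme is $\Pi_k \circ \Phi_\Rset$: the endofunctor $\Phi_\Rset$ adds a hyperedge for every morphism from a graph in $\Rset$ into $G$, and $\Pi_k$ takes connected components of the $k$-line graph of the result and then unions the vertex sets of the edges in each component. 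Parts are therefore \emph{not} ``maximal pairwise-witnessed sets''; they are unions over connected components of a line graph. Consequently the real work in proving excisiveness is not ``the witnesses persist under restriction'' (that handles only the $\Phi_\Rset$ half, via $\Phi_\Rset(G|_p) = \Phi_\Rset(G)|_p$); one must separately prove that $\Pi_k$ itself is excisive, which is delicate because the union step is not functorial and a part $p$ can arise from a connected component of the line graph whose edge set is a \emph{strict} subset of the edge set of $G|_p$. Relatedly, your reading of excisiveness as ``returns $P$ as the unique part'' is strictly stronger than the paper's definition (which only requires that $p$ remain \emph{a} part of $\Clust(G|_p)$ and explicitly allows other parts); under your reading the first assertion would be false for, e.g., $\Pi_\infty$, so your worry about ``finer structure appearing'' is solving a problem the theorem does not pose.

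For the reconstruction, the paper takes simply $\Rset = \left\{H \given V(H)\text{ is a part of }\Clust(H)\right\}$ (whole vertex sets, no minimality) and, crucially, $k = \infty$, so that the parts of $\Pi_{\Rset,\infty}(G)$ are exactly the vertex sets of the edges of $\Phi_\Rset(G)$; your sketch never specifies which $k$ to use, and without that choice the two inclusions you describe do not go through. Your two inclusions are otherwise the right ones (excisiveness puts each part of $\Clust(G)$ into $\Rset$ via $G|_p$, functoriality pushes each witness's image into a part of $\Clust(G)$), but note they are asymmetric: every part of $\Clust(G)$ \emph{is} a part of $\Pi_{\Rset,\infty}(G)$, while parts of $\Pi_{\Rset,\infty}(G)$ are merely \emph{contained in} parts of $\Clust(G)$. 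That asymmetry is exactly the paper's notion of one scheme \emph{refining} another, and the reason equality weakens to isomorphism is precisely that the representable scheme may add spurious parts properly contained in genuine ones -- which are removable by an isomorphism in $\Pset$. Your guess (``$\Rset$ too large, relabeling'') gestures at this but does not pin it down, and the paper further shows (via a separate counterexample) that the weakening to isomorphism is unavoidable, not an artifact of this particular construction.
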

 
Finally, we also give a structural result for when two representable clustering schemes are equal, and use this to prove that there exist representable clustering schemes that are not finitely representable, even in the setting of simple graphs only.

\section{Preliminaries}

The reader who is less familiar with category theory is invited to review the very few concepts from it that we use in Appendix~\ref{cat_theory_appendix}.

\paragraph*{List of notation}
\begin{enumerate}
    \item We use capital Roman letters for graphs, e.g. $G$ or $H$, and their vertex and edge sets are also capital Roman letters, e.g. $V, W$ and $E, F$. Likewise, when considering a partitioned set, both its underlying set and its set of parts are capital Roman letters, e.g. $V$ and $P$. 
    \item We use lowercase Roman letters for vertices or edges of graphs, e.g. $v$ or $e$, for parts of a partitioned set, e.g. $p$ or $q$, and for functions between sets and for morphisms, e.g. $f$ or $g$. One exception to this is that we use lowercase Greek letters for morphisms from a graph $R$ in a representing set $\Rset$, e.g. $\omega$.
    \item For the function sending an edge to its vertex set, we use the character $\ea$, pronounced ``e''.
    \item For functors, we use capital Greek letters, e.g. $\Phi$, $\Pi$, $\Lambda$.
    \item For clustering schemes of graphs, and also for representing sets for representable clustering schemes, we use Fraktur letters, e.g. $\Clust$, $\Rset$.
    \item If $f: X \to Y$ is a set function, we write $f_*$ for the function from $2^X$ to $2^Y$ that sends a subset of $X$ to its image under $f$ in $Y$.
\end{enumerate}

\section{Definitions of our terms}

\begin{definition}
    The category of hypergraphs $\G$ has as its objects hypergraphs, which we consider to be a triple $(V,E,\ea)$ of a finite set $V$ of vertices, a finite set $E$ of edges that is disjoint from $V$, and a function $\ea: E \to 2^V$ assigning each edge its vertex set. Notice that this allows us to have multiple edges with the same edge set.
    
    A morphism from $H = (V,E,\ea)$ to $G = (V', E',\ea')$ is an injective set function $f: V \to V'$ such that, for every $e \in E$, there is an $e' \in E'$ such that $f_*(\ea(e)) = \ea'(e')$. That is, a morphism from $H$ to $G$ is a way of realising $H$ as a sub-hypergraph of $G$.
\end{definition}

\begin{definition}
    The category of simple graphs $\G_s$ is the full subcategory of $\G$ whose objects are the simple graphs, that is, the graphs all of whose edges contain two vertices, and where no two edges have the same vertex set. That is, there are no loops or parallel edges.

    Whenever we use just the word ``graph'', we mean a hypergraph, since those are more common for us.
\end{definition}

\begin{definition}
    For a graph $G = (V,E,\ea)$ and a subset of its vertices $p \subseteq V$, we define the \emph{restriction} of $G$ to $p$, $G|_p$, by that its vertex set is $p$, and its edges are precisely those edges of $G$ all of whose vertices are contained in $p$. It is easy to see that the set inclusion function $i: p \hookrightarrow V$ will be a morphism from $G|_p$ to $G$.
\end{definition}

\begin{definition}
    The category $\Pset$ of sets with overlapping partitions has as its objects finite sets $V$ together with a set of parts $P \subseteq 2^V$.

    A morphism in $\Pset$ from $(V,P)$ to $(W,Q)$ is a set function $f: V \to W$ such that for every $p \in P$, there exists a $q \in Q$ such that $f_*(p) \subseteq q$.
\end{definition}

It is worth remarking here that this is a very permissive definition of a partitioned set. It does not require that the union of the parts cover the entire set, nor does it require that the parts be disjoint. In fact, it even permits the cases where $\emptyset \in P$ or $\emptyset = P$.

\begin{remark}
    Notice that this definition does not exclude the case of one part properly containing another part. However, if we have a partitioned set $(V,P)$ with two parts $p \subsetneq q$, we can always remove the part $p$ to get a new partitioned set $(V, P \setminus \{p\})$, which will be isomorphic to $(V, P)$ in $\Pset$ via the identity function on the set $V$. We will call such parts which are properly contained in other parts \emph{spurious} parts.

    So, up to isomorphism, we need not care about such parts -- however, allowing them in our definition makes some of our constructions clearer, and so we choose to keep them in our definition. One might think that we should instead have picked a stronger notion of morphism, to make these be non-isomorphic, but as we will see in Example~\ref{ex:scandalous_morphism}, this would break the functoriality of our generalized notion of connected components.
    
    In particular, partitioned sets of the form $(V, \{V,W\})$ arise as the image of graphs under this connected components functor, and morphisms between graphs turn into morphisms from $(V, \{V,W\})$ to $(V, \{V,W'\})$ for arbitrary choices of $W$ and $W'$. So the fact that the spurious part $W$ was allowed to move around arbitrarily is in fact a feature, not a bug, of our definition.
\end{remark}

\begin{definition}
    The category $\Psetno$ of sets with non-overlapping partitions is the full subcategory of $\Pset$ whose objects have disjoint set covers. We may equivalently think of it as the category of sets equipped with partial equivalence relations, where the equivalence classes are the parts of the partitioned set.
    
    The partial-ness of the equivalence relations here comes from the fact that we do not require every vertex to be contained in a part. This is just an artifact of our definitions, which lead to us saying that an isolated vertex belongs to no part, instead of belonging to a part of its own. While this is somewhat unnatural, the only ``fix'' to this would be to require a loop at every vertex of a graph, which comes with its own awkwardness.
\end{definition}

\begin{definition}
    A \emph{(general) clustering scheme} is a function $\Clust$ that sends hypergraphs to partitioned sets, with the same underlying set -- that is, if $\Clust$ sends $G = (V,E,\ea)$ to $P = (W, P)$, we require that $V = W$. If the image of $\Clust$ is in $\Pset$, we say that it is a \emph{non-overlapping} clustering scheme.
    
    If, in addition, $\Clust$ becomes a functor from $\G$ to $\Pset$ by defining $\Clust(f) = f$ as functions on the underlying set, we say that it is \emph{functorial}.
    
    We may abuse our notation slightly and write $p \in \Clust(G)$ when what we mean is that $\Clust(G) = (V,P)$ and $p \in P$.
\end{definition}

One good way to think of these definitions is as a kind of generalization of monotonicity. If we were restricting to only studying clustering of graphs on a fixed underlying set with all morphisms as the inclusion map, the statement that there is a morphism from $G$ to $H$ becomes precisely the statement that $G \subseteq H$, that is that $G$ is less than $H$ in the subgraph inclusion order. Likewise, if we restricted to only studying non-overlapping partitions, there is a natural partial order on the set of equivalence relations on the underlying vertex set, given by that $\sim < \approx$ if $\sim$ is a refinement of $\approx$, or equivalently if $\approx$ is a coarsening of $\sim$.

In this restricted setting, the statement that a clustering scheme is functorial becomes exactly the statement that it is monotone with respect to the inclusion order and the order on equivalence relations. By using the language of category theory to encode this information, we gain the ability to talk about things being ``monotone under addition of vertices'', not just under addition of edges, and it becomes easier to speak about monotonicity with respect to non-overlapping set partitions.

\begin{definition}
    A clustering scheme $\Clust$ is \emph{excisive} if, for all graphs $G = (V, E, \ea)$ and all parts $p \in \Clust(G)$, $p$ is a part of $\Clust(G|_p)$.

    So what this means is that if we ``zoom in'' on one part of a graph, the part we started with does not break apart into smaller pieces. Notice that this does not put any restrictions on what happens to any parts contained in or overlapping $p$.
\end{definition}

In the setting of non-overlapping partitions, this property is more intuitive to state: If we cluster some graph, pick out one of the parts, and cluster just the subgraph induced by that part, we get back a trivial partition into a single part. Adapting this into the overlapping clustering world unfortunately gets us a less transparent definition.

\section{Representability}

\begin{definition}
    Given a not necessarily finite set $\Rset$ of graphs, we define the endofunctor $\Phi_\Rset$ on $\G$ by that, for any graph $G = (V,E)$,
    \begin{enumerate}
        \item the vertices of $\Phi_\Rset(G)$ are simply $V$,
        \item the edges of $\Phi_\Rset(G)$ are given by
        $$E(\Phi_\Rset(G)) = \bigcup_{R \in \Rset} \hom(R, G),$$
        the set of all morphisms from a graph in $\Rset$ into $G$,
        \item and the edge-assigning function $\ea_{\Phi_\Rset(G)}: E(\Phi_\Rset(G)) \to 2^V$ simply assigns each morphism to its image, i.e. for $\omega \in \hom(R, G)$, we set $\ea_{\Phi_\Rset(G)}(\omega) = \omega_*(R)$.
    \end{enumerate}

    On morphisms, $\Phi_\Rset$ is always just the identity on set functions, so if $f: G \to H$ is a morphism, $\Phi_\Rset(f) = f$ as a set function.
\end{definition}

In other words, more concretely, we get a hyperedge in $\Phi_\Rset(G)$ for every copy of a graph in $\Rset$ inside of $G$.

That this definition does in fact always give us a functor is something that needs to be checked -- it is not immediate from the definition, even though we said in the definition that it is an endofunctor.

\begin{lemma}\label{lemma_representable_endofunctor_is_functor}
    For any representing set $\Rset$ of graphs, $\Phi_\Rset$ is indeed an endofunctor on $\G$.

    \begin{proof}
        That it always gives a graph and that it respects identity morphisms and composition of morphisms is easy to see. The only thing we actually need to check is that if $f: G \to G'$ is a morphism, then $\Phi_\Rset(f) = f: \Phi_\Rset(G) \to \Phi_\Rset(G')$ is also a morphism.

        Unwrapping this statement, what we need is that, for any two graphs $(G, E, \ea)$ and $(G', E', \ea')$ and any morphism $f: G \to G'$, whenever $e$ is an edge of $\Phi_\Rset(G)$, there is an edge $e'$ of $\Phi_\Rset(G')$ such that $f_*(\ea_{\Phi_\Rset(G)}(e)) = \ea'_{\Phi_\Rset(G')}(e')$.

        Now, this $e$ is itself a morphism $\omega: R \to G$ for some $R \in \Rset$, and so if we compose $e$ with $f$ we get a morphism $e' = f \circ e: R \to G'$, which by definition is an edge of $\Phi_\Rset(G')$. That $f_*(\ea_{\Phi_\Rset(G)}(e)) = \ea'_{\Phi_\Rset(G')}(e')$ is then, if we unwrap our definitions, merely the statement that $f_*(e_*(R)) = (f \circ e)_*(R)$, which is trivial.
    \end{proof}
\end{lemma}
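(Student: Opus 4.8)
The plan is to treat every functor axiom except one as immediate and to concentrate on the single substantive point, namely that $\Phi_\Rset$ sends morphisms to morphisms. Because $\Phi_\Rset$ is defined to act as the identity on underlying vertex-set functions, and because the vertex set of $\Phi_\Rset(G)$ is literally $V(G)$, it is immediate that $\Phi_\Rset(\id_G) = \id_{\Phi_\Rset(G)}$, that $\Phi_\Rset(g \circ f) = \Phi_\Rset(g) \circ \Phi_\Rset(f)$, and that $\Phi_\Rset(G)$ is a well-defined hypergraph (its edge set is $\bigcup_{R \in \Rset} \hom(R,G)$, equipped with the map sending each morphism to the image of its vertex set). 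None of these require any computation, so the whole content of the lemma is the preservation of morphisms.

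To set that up, let $f : G \to G'$ be a morphism in $\G$ and regard $f$ as a set function $V(G) \to V(G')$; I want to verify the two conditions in the definition of a morphism $\Phi_\Rset(G) \to \Phi_\Rset(G')$. Injectivity on vertices is inherited for free, since $f$ is already a morphism $G \to G'$ and $\Phi_\Rset$ leaves vertex sets untouched. The remaining condition is that for every edge $e$ of $\Phi_\Rset(G)$ there is an edge $e'$ of $\Phi_\Rset(G')$ with $f_*(\ea_{\Phi_\Rset(G)}(e)) = \ea_{\Phi_\Rset(G')}(e')$.

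The key step is to build $e'$ by composition. By construction an edge $e$ of $\Phi_\Rset(G)$ is a morphism $\omega : R \to G$ for some $R \in \Rset$, with $\ea_{\Phi_\Rset(G)}(e) = \omega_*(V(R))$. Since composites of morphisms in $\G$ are again morphisms, $e' := f \circ \omega$ is a morphism $R \to G'$, hence an edge of $\Phi_\Rset(G')$; and the vertex set it is assigned is $(f \circ \omega)_*(V(R)) = f_*(\omega_*(V(R))) = f_*(\ea_{\Phi_\Rset(G)}(e))$, the middle equality being exactly the functoriality of taking images. That is the identity we needed, which finishes the argument.

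I do not expect a genuine obstacle here: the statement is a definitional unwinding rather than a theorem with a trick, and the only real work is bookkeeping. The one thing one must keep straight is that the edge-assignment map on $\Phi_\Rset(G)$ is ``take the image of the vertex set of the source morphism,'' because it is precisely this choice that makes the compatibility condition collapse to $(f \circ \omega)_* = f_* \circ \omega_*$. It is also worth having recorded --- ideally already as part of checking that $\G$ is a category at all --- that the composite of two hypergraph morphisms is again a hypergraph morphism (the composite of injections is injective, and the edge-image condition chains through); with that in hand the construction of $e'$ is forced.
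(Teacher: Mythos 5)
Your proof is correct and follows essentially the same route as the paper's: the only substantive check is that morphisms are preserved, the new edge is obtained by composing $f$ with the witnessing morphism $\omega: R \to G$, and the edge-assignment compatibility collapses to $(f \circ \omega)_* = f_* \circ \omega_*$. Your explicit remark that injectivity of the underlying set function is inherited for free is a minor point the paper leaves implicit, but otherwise the arguments coincide.
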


What is going on in the proof of Lemma~\ref{lemma_representable_endofunctor_is_functor} is much clearer if we just think about it in terms of things being subgraphs of each other, forgetting the data of \emph{how} exactly they are subgraphs. Then the statement that $\Phi_\Rset$ is a functor boils down to the statement that if $\omega$ is a subgraph of $H$ and $H$ is a subgraph of $G$, then $\omega$ is a subgraph of $G$, and so $\Phi_\Rset(G)$ must have all the edges $\Phi_\Rset(H)$ has, since edges in $\Phi_\Rset(G)$ represent the presence of a certain subgraph in $G$.

When constructing this theory for the case of non-overlapping partitions, the crucial step is to take the connected components of the graph $F_\Omega(G)$, and so get a functor taking $G$ to a set partition. By generalizing the notion of connected components a bit, we get many more options that will output also overlapping partitions.

\begin{definition}
    The functor $\Pi: \G_s \to \Psetno$ which sends a simple graph to its partition into connected components is called the connected components functor.
\end{definition}

Checking that this is indeed a functor is an easy exercise, and so we omit the proof -- it essentially boils down to the statement that adding an edge to a graph cannot disconnect a connected component, or equivalently that connectedness is a monotone increasing property in the edges.

In order to generalize this notion to hypergraphs, let us notice that for simple graphs, taking the line graph of the graph and looking at its connected components changes nothing -- the set of vertices incident to the edges of a connected component of the line graph is a connected component of the original graph.

For hypergraphs, we get more than one possible notion of line graph, and therefore we get more than one notion of connected components.

\begin{definition}
    For each $k \in \N \cup \{\infty\}$, we define the $k$-line graph functor $\Lk: \G \to \G_s$ as follows:
    
    For each $G = (V,E,\ea)$:
    \begin{enumerate}
        \item The vertices of $\Lk(G)$ are the vertex sets of the edges of $G$. That is, 
        $$V(\Lk(G)) = \left\{\ea(e) \given e \in E\right\}.$$
        Notice that this means that if there are multiple edges in $E$ with the same vertex set, we still only get one vertex of $\Lk(G)$ corresponding to them.
        \item there is an edge $uv \in E(\Lk(G))$ whenever $\abs{u \cap v} \geq k$, that is, if the two edges of $G$ overlap in at least $k$ vertices. So if $k = \infty$, we get a trivial case where the line graph has no edges.
    \end{enumerate}

    We define, for a morphism $f: H \to G$ in $\G$, that $\Lk(f) = f_*: 2^{V(H)} \to 2^{V(G)}$. This does at least send objects of the right type to objects of the right type, because the vertices of $\Lk(H)$ and $\Lk(G)$ are by construction sets of vertices of $H$ and $G$ respectively.

    However, since $V(\Lk(G))$ does not contain \emph{every} subset of $V(G)$, we do need to check that for a vertex $v$ of $\Lk(H)$, $f_*(v)$ is indeed a vertex of $\Lk(G)$. To see this, note that this $v$ is by construction the vertex set of some edge $e \in E(H)$, and since $f$ is a morphism, there exists some edge $e' \in E(G)$ such that $f_*(\ea_H(e)) = \ea_G(e')$ -- and this $\ea_G(e')$ is a vertex of $\Lk(H)$.
\end{definition}

That this line graph functor is in fact a functor definitely requires a proof, since the statement is far from obvious.

\begin{lemma}\label{lem:line_graph_is_functor}
    For each $k$, $\Lk$ is in fact a functor from $\G$ to $\G_s$.
\end{lemma}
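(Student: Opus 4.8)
The plan is to verify the three things that make $\Lk$ a functor: that $\Lk(G)$ is always a simple graph, that $\Lk$ respects identities and composition, and — the only non-trivial part — that $\Lk(f)$ is a morphism in $\G_s$ whenever $f$ is a morphism in $\G$. The first point is immediate: the edges of $\Lk(G)$ are by construction two-element subsets of $V(\Lk(G))$, no two distinct edges share a vertex set, and since an edge $uv$ joins distinct edge-sets there are no loops, so $\Lk(G) \in \G_s$. The second point is also immediate, since $\Lk$ acts on morphisms by $f \mapsto f_*$, and $(\id_G)_*$ is the identity on $2^{V(G)}$ while $(g \circ f)_* = g_* \circ f_*$; restricting to the relevant vertex sets changes nothing. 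The well-definedness of $\Lk(f)$ on objects — that $f_*$ sends a vertex of $\Lk(H)$ to a vertex of $\Lk(G)$ — was already checked when the functor was defined.

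So the work is to show: if $f: H \to G$ is a morphism in $\G$, then $\Lk(f) = f_*|_{V(\Lk(H))}$ is a morphism in $\G_s$ from $\Lk(H)$ to $\Lk(G)$. Specialising the definition of a morphism in $\G$ to simple graphs, this requires two things: that $f_*$ is injective on $V(\Lk(H))$, and that for every edge $uv$ of $\Lk(H)$ the pair $\{f_*(u), f_*(v)\}$ is the vertex set of an edge of $\Lk(G)$, i.e. $f_*(u) \neq f_*(v)$ and $\abs{f_*(u) \cap f_*(v)} \geq k$. Injectivity is inherited from $f$: if $f$ is injective then $f^{-1}(f_*(A)) = A$ for every $A \subseteq V(H)$, so $f_*$ is injective on all of $2^{V(H)}$, a fortiori on $V(\Lk(H))$; this also yields $f_*(u) \neq f_*(v)$ from $u \neq v$.

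The heart of the proof is the edge condition, and it reduces to the elementary fact that an injective map commutes with intersections and preserves cardinalities of subsets: for injective $f$ and any $A, B \subseteq V(H)$ we have $f_*(A \cap B) = f_*(A) \cap f_*(B)$ (the inclusion $\subseteq$ is automatic, and for $\supseteq$ one pulls a common image value back to a common preimage using injectivity) and $\abs{f_*(A)} = \abs{A}$. Applying this with $A = u$ and $B = v$ the vertex sets of two edges of $H$ adjacent in $\Lk(H)$, we get $\abs{f_*(u) \cap f_*(v)} = \abs{f_*(u \cap v)} = \abs{u \cap v} \geq k$, which is exactly the condition for $f_*(u)\, f_*(v)$ to be an edge of $\Lk(G)$. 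That finishes the argument.

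I do not expect a genuine obstacle here; the only thing to be careful about is the set-theoretic bookkeeping, since the vertices of $\Lk(H)$ and $\Lk(G)$ are themselves subsets of $V(H)$ and $V(G)$, so one is applying $f_*$ to elements that are sets. The conceptual point worth flagging is that $f_*(u \cap v) = f_*(u) \cap f_*(v)$ is the \emph{only} place where injectivity of morphisms is used essentially: if morphisms in $\G$ were allowed to be non-injective, a $k$-line-graph edge could be destroyed by collapsing the overlap of two hyperedges, and $\Lk$ would fail to be a functor. So this lemma is, in part, a justification for building injectivity into the definition of $\G$.
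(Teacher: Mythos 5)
Your proof is correct and follows essentially the same route as the paper's: both reduce the morphism condition to the two facts that $f_*$ inherits injectivity from $f$ and that $\abs{f_*(u) \cap f_*(v)} = \abs{u \cap v}$ for injective $f$, with the identity and composition axioms following from the corresponding properties of image maps. Your write-up merely spells out the intersection computation $f_*(u \cap v) = f_*(u) \cap f_*(v)$ in more detail and adds a (correct and worthwhile) remark on why injectivity of morphisms is essential here.
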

\begin{proof}
    That $\Lk(G)$ is indeed a simple graph for each $G \in \G$ is clear, so suppose $f: G \to H$ is a morphism in $\G$. We need to check that $\Lk(f): \Lk(G) \to \Lk(H)$ is a morphism in $\G_s$.

    So, to check this, there are two things we need to show: That $\Lk(f)$ is injective as a set function, and that, for each each edge $uv \in E(\Lk(G))$, there is an edge $f(u)f(v)$ in $E(\Lk(H))$. 
    
    If we unwrap our definitions a bit, what we end up with is that this boils down to the following two statements, both of which are easily verified facts about the image of functions:
    \begin{enumerate}
        \item Whenever $f$ is an injective function, $f_*$ is as well.
        \item If $f$ is injective, then $\abs{a \cap b} = \abs{f_*(a) \cap f_*(b)}$.
    \end{enumerate}

    That $\Lk$ sends identity morphisms to identity morphisms and respects composition of morphisms is easily seen from the corresponding statements about images of functions.
\end{proof}

So, now we have the line graph functor, which gives us a simple graph which we can take connected components of. This nearly gets us where we want to be -- except of course the connected components of the line graph of $G$ will be a partitioning of the vertex sets of the edges of $G$, not a partitioning of the vertices of $G$. So our last step to getting a generalized connected components functor is to turn this into a partitioning of the vertices.

\begin{definition}
    For any set $X$, let
    $$\upsilon(X) = \bigcup_{x \in X} x,$$
    that is, we send $X$ to the union of all its elements. The reader may be momentarily confused that we are taking the union of elements of a set we did not construct as a set of sets -- how do we know that the things we are taking a union of are actually sets, so this is well-defined? However, she can find solace in the fact that, under the axioms of ZFC, \emph{all} objects are sets, and so this is well defined -- and perhaps one of very few instances where ``all objects are sets'' is useful.
    
    We define the part-union operation $\Upsilon: \Pset \to \Pset$ by that for each $\mathcal{A} = (V, P) \in \Pset$, we let $\Upsilon(\mathcal{A}) = \mathcal{B} = (\upsilon(V),\upsilon_*(P))$.
\end{definition}

Unfortunately, $\Upsilon$ cannot be made into a functor in any natural way, as we discuss in Lemma~\ref{lemma:Upsilon_not_functorial}. However, when we apply $\Upsilon$ to the connected components of the line graph, we do know what to do with the morphisms, since we still ``remember'' what they were on the underlying graph we took the line graph of. So despite not itself being a functor, we can still use $\Upsilon$ to get a functor from $\G$ to $\Pset$.

With all this in hand, we are finally able to define the connected components of a hypergraph.

\begin{definition}
    For each $k$, the $k$-ly connected components functor $\Pi_k: \G \to \Pset$ is given by that, for any $G = (V,E)$, the parts of $\Pi_k(G)$ are the parts of $\left(\Upsilon \circ \Pi \circ \Lk\right)(G)$, and the underlying set of $\Pi_k(G)$ is $V$. On morphisms, $\Pi_k(f)$ is simply the same underlying set function -- this is well defined, since clearly the underlying set of $\Pi_k(G)$ will be the same set as the vertex set of $G$.

    We say that a graph is \emph{$k$-ly connected} if $\Pi_k(G)$ has the entire vertex set of $G$ as a part, and call the parts of $\Pi_k(G)$ \emph{$k$-ly connected components} of $G$.
\end{definition}

The reason we do not simply say that $\Pi_k(G) = \Upsilon(\Pi(\Lk(G)))$ is that if there is an isolated vertex of $G$, this vertex will not be in any edge of $\Lk(G)$, and so when we apply $\Upsilon \circ \Pi$, it would not be in the underlying set. Thus our definition is the necessary choice to not forget about the isolated vertices entirely -- we do still end up declaring that they are in no part, but there seems to be no nice way of defining that problem away.

That this one is a functor also requires a proof, since it is defined as a composition of some things one of which is not itself a functor. However, the fact that the first two things are functors will be helpful in the proof.

\begin{lemma}\label{lem:k_conn_is_functor}
    For each $k$, $\Pi_k$ is a functor.
\end{lemma}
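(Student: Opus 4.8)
The plan is to exploit the fact that $\Pi \circ \Lk$ \emph{is} a genuine functor $\G \to \Psetno \subseteq \Pset$ --- being a composition of the $k$-line graph functor of Lemma~\ref{lem:line_graph_is_functor} with the connected components functor --- and to check that post-composing with the non-functorial operation $\Upsilon$ (cf. Lemma~\ref{lemma:Upsilon_not_functorial}) and then re-adjoining the isolated vertices of $G$ to the underlying set nonetheless preserves functoriality, precisely because on morphisms coming from $\Lk$ the operation $\upsilon$ behaves as if it were natural.

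First, the bookkeeping that is essentially free. On a morphism $f : G \to H$ the functor $\Pi_k$ returns the same set function $f : V(G) \to V(H)$; since the underlying set of $\Pi_k(G)$ is $V(G)$ by definition, this is a set function of the correct type, it sends $\id_G$ to $\id_{\Pi_k(G)}$, and it respects composition --- all immediately from the corresponding statements about set functions. So the only real content is that $\Pi_k(f) = f : \Pi_k(G) \to \Pi_k(H)$ is a morphism in $\Pset$, i.e.\ that $f_*$ carries every part of $\Pi_k(G)$ into some part of $\Pi_k(H)$.

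Next, recall that the parts of $\Pi_k(G)$ are exactly the sets $\upsilon(c) = \bigcup_{x \in c} x \subseteq V(G)$, where $c$ ranges over the parts of $(\Pi\circ\Lk)(G)$, i.e.\ over the connected components of the simple graph $\Lk(G)$, whose vertices are themselves certain subsets of $V(G)$. Because $\Pi\circ\Lk$ is a functor, $(\Pi\circ\Lk)(f)$ --- which is the set function $\Lk(f) = f_*$ restricted to $V(\Lk(G))$ --- is a morphism in $\Psetno$, so for each such $c$ there is a connected component $c'$ of $\Lk(H)$ with $(\Lk(f))_*(c) \subseteq c'$, and $\upsilon(c')$ is then a part of $\Pi_k(H)$. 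The one step that needs genuine care is the commutation identity
\[
\upsilon\bigl((\Lk(f))_*(c)\bigr) \;=\; f_*\bigl(\upsilon(c)\bigr),
\]
which holds because $(\Lk(f))(x) = f_*(x)$ for every vertex $x$ of $\Lk(G)$, so the left-hand side equals $\bigcup_{x\in c} f_*(x)$ while the right-hand side equals $f_*\bigl(\bigcup_{x\in c} x\bigr)$, and these agree since images commute with unions. Combining, $f_*(\upsilon(c)) = \upsilon\bigl((\Lk(f))_*(c)\bigr) \subseteq \upsilon(c')$, a part of $\Pi_k(H)$, as required. Finally, one should remark that the only difference between $\Pi_k(G)$ and $(\Upsilon\circ\Pi\circ\Lk)(G)$ is that the former re-includes the vertices of $G$ lying in no edge; since those vertices lie in no part, they do not affect the part-containment condition just verified, so nothing is lost by this adjustment (indeed, this is exactly why the definition made it).

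I do not expect any deep obstacle here: the argument is forced once one sets it up. The only place to be careful is the layering of ``sets of sets'' --- the vertices of $G$, then $V(\Lk(G)) \subseteq 2^{V(G)}$, then the components $c \subseteq V(\Lk(G))$ --- and making sure $\Lk(f)$, $(\Lk(f))_*$ and $f_*$ are applied at the correct level in the displayed identity. That identity is the crux, as it is precisely the fragment of ``naturality of $\Upsilon$'' that survives when we only feed $\Upsilon$ morphisms induced by $\Lk$, and it is what lets a composition involving the non-functorial $\Upsilon$ still come out functorial.
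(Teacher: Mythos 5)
Your proposal is correct and follows essentially the same route as the paper's proof: both reduce the problem to the functoriality of $\Pi\circ\Lk$ and then verify that $\upsilon$ intertwines $(\Lk(f))_*$ with $f_*$ via the chain $f_*\bigl(\bigcup_{x\in c}x\bigr)=\bigcup_{x\in c}f_*(x)\subseteq\bigcup_{x'\in c'}x'$, which is exactly the displayed computation in the paper. Your explicit framing of this as the surviving fragment of naturality of $\Upsilon$, and the remark about isolated vertices, are accurate glosses on the same argument.
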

\begin{proof}
    As usual, once we have checked that $\Pi_k$ does send morphisms to morphisms, it is trivial that it respects identity morphisms and composition.

    So, suppose $G = (V,E)$ and $H = (W,F)$ are two graphs, and $f: V \to W$ is a morphism from $G$ to $H$. Let $E' = \left\{\ea(e) \given e \in E\right\}$ and $F'$ similarly be the set of vertex sets of edges of $H$.
    
    Now let $\mathcal{A} = (E',P) = \Pi(\Lk(G))$, $\mathcal{B} = (F',P') = \Pi(\Lk(G))$ be the connected components of the $k$-line graphs of $G$ and $H$ respectively. Further let $\tilde{\mathcal{A}} = (V,Q) = \Upsilon(\mathcal{A})$ and $\tilde{\mathcal{B}} = (W,Q') = \Upsilon(\mathcal{B})$.

    What we need to show is that $f$ is a morphism from $\Upsilon(\mathcal{A})$ to $\Upsilon(\mathcal{B})$ in $\Pset$. This concretely means that we need to show that, for each $q \in Q$, there is a $q' \in Q'$ such that $f_*(q) \subseteq q'$.

    Now, for each $q \in Q$, there is a part $p \in P$ such that $q = \bigcup_{e \in p} e$. By functoriality of $\Lk$ and $\Pi$, we already know that $f_*$ is a morphism from $\mathcal{A}$ to $\mathcal{B}$. This means that there has to exist a $p' \in P'$ such that $(f_*)_*(p) \subseteq p'$.

    Now, by definition
    $$(f_*)_*(p) = \left\{f_*(e) \given e \in p\right\},$$
    so if we let $q' = \bigcup_{e \in p'} e$, this must lie in $Q'$ by construction, and we get that
    \begin{align*}
        f_*(q) = f_*\left(\bigcup_{e \in p} e\right) &= \bigcup_{e \in p} f_*(e) = \bigcup_{e' \in (f_*)_*(p)} e' \subseteq \bigcup_{e' \in p'} e' = q',
    \end{align*}
    as desired.
\end{proof}

\begin{figure}
    \centering
    \subfloat[\centering A graph $G$]{{\includegraphics[height=5cm]{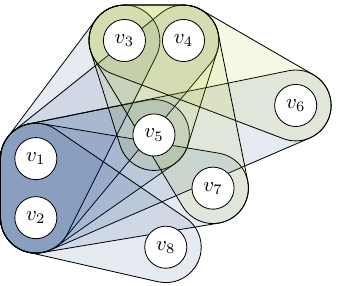} }}%
    \qquad
    \subfloat[\centering A graph $H$]{{\includegraphics[height=5cm]{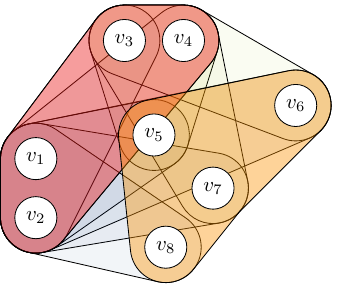} }}%
    \caption{An illustration of two graphs in $\G$, with $H$ a subgraph of $G$, where the image of this morphism under $\Lambda_2$ gives a surprising morphism in $\Pset$, as explained in Example~\ref{ex:scandalous_morphism}. Note that $H$ has all the edges of $G$, and an additional two edges, illustrated in red and orange -- the other edges have had their colour faded to help clarity, but are still present.}
    \label{fig:scandalous_morphism}
\end{figure}

\begin{remark}\label{remark:k1_case_nonoverlapping}
    In the case where $k=1$, $\Pi_1$ always gives a non-overlapping clustering, and if we also restrict ourselves to simple graphs , $\Pi_1 = \tilde{\Pi}$, where $\tilde{\Pi}$ is the usual connected components functor, except with isolated vertices assigned to no part.

    To see the first statement, suppose there were a vertex in two different parts. Then, it would be in each of these parts because it is in the vertex sets of two different edges belonging to these different parts -- but then, since $k=1$, these edges are adjacent in the $1$-line graph, and so the two parts must in fact be one part.

    For the second statement, notice that for simple graphs $\Lambda_1$ is just the usual construction of the line graph -- and so the claim is elementary graph theory.
\end{remark}

\begin{example}\label{ex:scandalous_morphism}
    At this point, we can illustrate why our class of morphisms of partitioned sets has to be so large. Consider the graphs $G$ and $H$ shown in Figure~\ref{fig:scandalous_morphism}. $G$ has eight vertices, and an edge $v_1 v_2 v_i$ for each $i > 2$, illustrated in blue, and an edge $v_3 v_4 v_j$ for $j = 5, 6, 7$, illustrated in green. $H$ is a supergraph of $G$, with all the edges of $G$ and two additional edges, $v_1 v_2 v_3 v_4$ and $v_5 v_6 v_7 v_8$, illustrated in red and orange. So that the identity function on the vertex set is a morphism from $G$ to $H$ is clear.

    Now consider the images of these two graphs under $\Pi_2$. In $G$, the blue edges will form a clique in the $2$-line graph, and the green edges will form another clique. No green edge overlaps any blue edge in more than a single vertex, so these two cliques will be the connected components, and so the parts of $\Pi_2(G)$ will be the entire vertex set and $\{v_3, v_4, v_5, v_6, v_7\}$.

    The addition of the red edge to $H$ corresponds, in the line graph, to adding a new vertex with an edge to every vertex in the two cliques, so they merge into a new connected component, whose image under $\upsilon$ will be the entire vertex set. The orange edge, however, will not be connected to any of the vertices in the two cliques, and so will form a new connected component, whose image under $\upsilon$ will be $\{v_5, v_6, v_7, v_8\}$.

    So we have seen that there must be a morphism from $(V, \{V, \{v_3, v_4, v_5, v_6, v_7\}\})$ to $(V, \{V, \{v_5, v_6, v_7, v_8\}\})$ in $\Pset$, which may at first have seemed like a bug in our definition, but is in fact a necessary feature.
\end{example}

\begin{definition}\label{definition_representable_clustscheme}
    A clustering scheme $\Clust$ is \emph{representable} if $\Clust = \Pi_k \circ \Phi_\Rset$ for some representable endofunctor $\Phi_\Rset$ and some $k \geq 1$. Notice that this means that all representable clustering schemes are functorial, being the composition of two functors.

    Given a set $\Rset$ of graphs and an integer $k$, we write $\Pi_{\Rset,k}$ for the representable clustering scheme induced by $(\Rset,k)$, that is, $\Pi_{\Rset,k} = \Pi_k \circ \Phi_\Rset$.
\end{definition}

\begin{example}
    If we restrict to considering only simple graphs with loops at every vertex, it is very easy to see that the connected components functor $\Pi$ is representable on this subcategory. In particular, we can take $\Rset = \{K_2\}$ and $k = 1$ -- then $\Phi_\Rset(G)$ is just $G$ with every edge replaced with two copies of itself, since for each edge $uv$ there are two morphisms from $K_2$ into $G$ hitting $u$ and $v$, one for each order of the two vertices.
    
    So $\Phi_\Rset(G)$ and $G$ have the same connected component, and connected components of the $1$-line graph of a simple graph and connected components of the graph itself are the same thing, and so $\Pi_{\Rset,1}(G) = \Pi(G)$.
    
    For hypergraphs, we need to take $\Rset = \left\{E_n \given n \in \N\right\}$, where $E_n$ is the hypergraph with $n$ vertices and a single edge containing all the vertices. Then all $\Phi_\Rset$ will do to a graph is to replace each edge with $k$ vertices with $k!$ copies of itself, which does not change the set of vertex sets of edges, and so again we recover the usual notion of connected components by looking at the connected components of the $1$-line graph of the graph.
    
    In fact any set $\Rset$ of connected graphs containing $E_n$ for each $n$ will represent $\Pi_1$ -- but note that they will in general give inequivalent endofunctors on $\G$.
\end{example}

It is not too hard to see that there is no \emph{finite} $\Rset$ representing $\Pi_1$ -- for if there were, there would be some $R \in \Rset$ and some $e \in E(R)$ such that $\abs{\ea(e)} = \ell$ is maximal, and so for any $j > \ell$, there would be no morphism from any $R' \in \Rset$ into $E_j$, since there are no edges of $R'$ with more than $\ell$ vertices. So $\Pi_{\Rset,1}(E_j)$ would have no parts, while clearly $\Pi_1(E_j)$ has one part containing all the vertices. So $\Pi_{\Rset,1} \neq \Pi_1$ for any finite $\Rset$.

So far, this is perhaps not too surprising -- letting the size of edges go to infinity breaks the idea of a finite representation. However, something much stronger is true: There is an infinite set $\Rset$ of \emph{simple} graphs such that $\Pi_{\Rset,1}$ is not finitely representable.

\begin{theorem} \label{theorem_exists_not_finitely_representable}
    There exists an infinite set $\Rset$ of simple graphs such that $\Pi_{\Rset,1}$ is not \emph{finitely} representable, that is, it is not equal to $\Pi_{\Gset,1}$ for any finite set of graphs $\Gset$.

    A fortiori, there exists a representable endofunctor that is not finitely representable.
\end{theorem}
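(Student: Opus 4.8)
The plan is to exhibit an explicit infinite family of simple graphs whose induced clustering scheme defeats every finite representing set; the natural candidate is $\Rset = \{C_n \mid n \geq 3\}$, the set of all cycles. I would first record the value of $\Pi_{\Rset,1}$ on two test families. On a cycle $C_n$: the only subgraph of $C_n$ containing a cycle is $C_n$ itself, so the only morphisms from a graph in $\Rset$ into $C_n$ are the (automorphism-type) morphisms $C_n \to C_n$, each with image all of $V(C_n)$; hence $\Phi_\Rset(C_n)$ is a hypergraph every hyperedge of which has vertex set $V(C_n)$, its $1$-line graph is a single vertex, and $\Pi_{\Rset,1}(C_n) = (V(C_n), \{V(C_n)\})$. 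On a path $P_m$ (indeed any forest): no graph in $\Rset$ embeds, since paths are acyclic, so $\Phi_\Rset(P_m)$ has no edges and $\Pi_{\Rset,1}(P_m) = (V(P_m), \emptyset)$.

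Now suppose for contradiction that $\Pi_{\Rset,1} = \Pi_{\Gset,1}$ for some finite $\Gset$. I may assume $\Gset$ is nonempty and contains no edgeless graph: an edgeless graph on at least one vertex is a linear forest and is excluded by the argument below, while the empty graph would force $\Phi_\Gset$ to always produce a hyperedge with empty image, hence a spurious empty part on every graph, contradicting $\Pi_{\Rset,1}(C_3) = (V(C_3),\{V(C_3)\})$ and $\Pi_{\Rset,1}(P_1)=(V(P_1),\emptyset)$. The first key step is: \emph{no $R \in \Gset$ is a linear forest.} If $R$ were a disjoint union of paths on $t$ vertices it would embed into $P_t$, so $\Phi_\Gset(P_t)$ would have a hyperedge and $\Pi_{\Gset,1}(P_t)$ a nonempty part, contradicting that $\Pi_{\Rset,1}(P_t)$ has no parts. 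The second key step: let $N = \max_{R \in \Gset} \abs{V(R)}$ and choose any $n > N$ with $n \geq 3$. Since $\Pi_{\Rset,1}(C_n) = (V(C_n), \{V(C_n)\})$ has a part, so must $\Pi_{\Gset,1}(C_n)$, so some $R \in \Gset$ admits a morphism into $C_n$, i.e.\ $R$ is isomorphic to a subgraph of $C_n$. But $R \not\cong C_n$ since $\abs{V(R)} \leq N < n$, and every proper subgraph of $C_n$ is a disjoint union of paths; hence $R$ is a linear forest, contradicting the first step. This shows $\Pi_{\Rset,1}$ is not finitely representable. The ``a fortiori'' claim is immediate: if $\Phi_\Rset = \Phi_\Gset$ for a finite $\Gset$, then $\Pi_{\Rset,1} = \Pi_1 \circ \Phi_\Rset = \Pi_1 \circ \Phi_\Gset = \Pi_{\Gset,1}$, which we just ruled out.

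I do not expect a serious obstacle: the argument is elementary once one spots the right pair of test families — cycles, which force the existence of a large part, and paths, which force the \emph{absence} of parts and thereby pin down that $\Gset$ can contain no linear forest. The only points needing care are the bookkeeping around degenerate members of $\Gset$ (the empty graph and isolated vertices) and stating cleanly the two structural facts used: that a finite simple graph admits a morphism into $C_n$ iff it is isomorphic to a subgraph of $C_n$, and that the unique subgraph of $C_n$ containing a cycle is $C_n$ itself (so every other subgraph is a linear forest). Both are routine, and one could equally run the argument with $\Rset = \{C_p \mid p \text{ prime}\}$, trading the path computation for the observation that composite cycles contain no cyclic subgraph from $\Rset$.
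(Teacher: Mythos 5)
Your argument is correct, but it takes a genuinely different route from the paper's, beginning with a different witness. The paper takes $\Rset$ to be the family of triangles with tails of every length, first builds a structural tool (Lemma~\ref{lemma_connected_representation_hull} and the ``hull'' $\bar{\Rset}$ of all graphs sent to $1$-ly connected graphs) to force every member of a hypothetical finite $\Gset$ to contain a triangle, and then derives the contradiction from a distance bound: the tail tip of $R_{r+1}$ lies farther from a triangle than any vertex of any graph in $\Gset$ does. You instead take $\Rset$ to be all cycles and work directly with two test families: paths show that $\Gset$ can contain no linear forest (else $\Pi_{\Gset,1}$ would produce a part on an acyclic graph, where $\Pi_{\Rset,1}$ produces none), while a cycle $C_n$ with $n$ exceeding the order of every member of $\Gset$ forces some member to map onto a \emph{proper} subgraph of $C_n$, hence to be a linear forest. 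This is more elementary and self-contained --- it bypasses the hull machinery and trades the paper's distance-to-a-triangle obstruction for a girth obstruction --- though it does not produce the reusable structural lemma the paper also exploits in its subsequent remarks on representable graph classes. One bookkeeping point you share with the paper: $\Gset$ ranges over finite sets of \emph{hypergraphs}, so the member $R$ admitting a morphism into $C_n$ need not literally be a simple subgraph of $C_n$; it could carry parallel $2$-element edges. Since any such $R$ still embeds into a path (parallel edges may share a target edge), your first step still excludes it, so this is a one-line patch, at the same level of informality as the paper's own ``no morphism from a non-simple graph into a simple graph.'' Your treatment of the empty and edgeless members and the a fortiori deduction are both fine.
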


In order to give a proof of this, we first need a structural result about how these endofunctors behave. Let us say that an edge in a hypergraph which contains all vertices is a \emph{spanning edge}, and a hypergraph which has a spanning edge is \emph{spanned}.

\begin{remark}\label{remark_omegas_sent_to_cliques}
    For any $R \in \Rset$, it is trivial to see that $\Phi_\Rset(R)$ must be spanned -- specifically we can take the identity morphism on $R$ as the spanning edge.
\end{remark}

In fact, which graphs are sent to spanned graphs by $\Phi_\Rset$ essentially determines what it does as a functor. Similarly, if we are interested in the weaker equivalence of inducing the same clustering scheme, this is determined by which graphs are sent to a connected graph.

\begin{lemma} \label{lemma_representation_hull}
    For any set $\Rset$ of graphs and any graph $G = (V, E, \ea)$, it holds that $\Phi_{\Rset \cup \{G\}} = \Phi_\Rset$ if and only if $G$ is sent to a spanned graph by $\Phi_\Rset$.

    \begin{proof}
        Suppose $\Phi_{\Rset \cup \{G\}} = \Phi_\Rset$. That $\Phi_{\Rset \cup \{G\}}(G)$ is spanned is immediate by Remark~\ref{remark_omegas_sent_to_cliques}, and so since $\Phi_{\Rset \cup \{G\}} = \Phi_\Rset$, $\Phi_\Rset(G)$ is spanned, as desired.

        Now suppose $\Phi_\Rset(G)$ is spanned, and let $H$ be some arbitrary simple graph. We wish to show that $\Phi_{\Rset \cup \{G\}}(H) = \Phi_\Rset(H)$. 
        
        That edges of $\Phi_\Rset(H)$ must also be edges of $\Phi_{\Rset \cup \{G\}}(H)$ is obvious, since a morphism from some graph in $\Rset$ into $H$ is also a morphism from a graph in $\Rset \cup \{G\}$ into $H$, so all we need to show is that edges of $\Phi_{\Rset \cup \{G\}}(H)$ are also edges of $\Phi_\Rset(H)$. Thus, suppose $e$ is an edge of $\Phi_{\Rset \cup \{G\}}(H)$ -- by construction, this edge is a morphism $\omega$ from some $R \in \Rset \cup \{G\}$ into $H$, whose image is the vertex set of this edge.

        If this $R$ is not $G$, then we are done. If it is $G$, we recall that $\Phi_\Rset(G)$ is spanned, and so we can take $e'$ to be a spanning edge of $\Phi_\Rset(G)$. Again, this edge is by construction a morphism $\omega'$ from some $R'$ -- this time an $R' \in \Rset$ -- into $G$, whose image, since it is spanning, must be the entirety of $G$.

        So, if we compose $\omega': R' \to G$ and $\omega: G \to H$, we get a morphism from $R'$ into $H$, which by definition is an edge of $\Phi_\Rset(H)$, and since $\omega'$ was onto, the image of this composition must be the same as the image of $\omega$, and so this edge has the right vertex set, and we are done.
    \end{proof}
\end{lemma}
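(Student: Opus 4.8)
The plan is to reduce equality of the two endofunctors to a statement about the vertex sets of the edges they produce. On morphisms both $\Phi_\Rset$ and $\Phi_{\Rset \cup \{G\}}$ act as the identity on the underlying set function, and on any input $H$ both assign the vertex set $V(H)$, so $\Phi_{\Rset \cup \{G\}} = \Phi_\Rset$ amounts to saying that for every graph $H$ the two hypergraphs realize the same collection of vertex sets as edges. Since $\Rset \subseteq \Rset \cup \{G\}$, every morphism from a graph in $\Rset$ into $H$ is also a morphism from a graph in $\Rset \cup \{G\}$, so one inclusion of edge vertex sets holds automatically for every $H$; the content of the lemma is therefore the reverse inclusion, and I will prove each direction of the biconditional against this reduction.

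For the forward direction I would specialize to $H = G$. By Remark~\ref{remark_omegas_sent_to_cliques} the identity morphism $\id_G$ is an edge of $\Phi_{\Rset \cup \{G\}}(G)$ whose image is all of $V$, so $\Phi_{\Rset \cup \{G\}}(G)$ is spanned. Granting $\Phi_{\Rset \cup \{G\}} = \Phi_\Rset$, the hypergraph $\Phi_\Rset(G)$ realizes the same edge vertex sets and hence is spanned as well, which is exactly what is claimed.

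For the converse I would assume $\Phi_\Rset(G)$ is spanned and fix an arbitrary $H$; by the reduction it suffices to show every edge vertex set of $\Phi_{\Rset \cup \{G\}}(H)$ already occurs in $\Phi_\Rset(H)$. Such an edge is a morphism $\omega\colon R \to H$ with $R \in \Rset \cup \{G\}$, and if $R \in \Rset$ there is nothing to prove, so suppose $R = G$. The key observation is that a spanning edge of $\Phi_\Rset(G)$ is by construction a morphism $\omega'\colon R' \to G$ with $R' \in \Rset$ whose image is all of $V$, that is, a \emph{surjective} morphism from a representing graph onto $G$. Precomposing gives $\omega \circ \omega'\colon R' \to H$, which is an edge of $\Phi_\Rset(H)$, and because $\omega'$ is onto we compute $(\omega \circ \omega')_*(V(R')) = \omega_*\bigl(\omega'_*(V(R'))\bigr) = \omega_*(V)$, which is precisely the image of the edge $\omega$ we started with. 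Thus every edge vertex set of $\Phi_{\Rset \cup \{G\}}(H)$ is realized in $\Phi_\Rset(H)$, giving the reverse inclusion for all $H$ and hence the functor equality.

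The only genuinely load-bearing step, and the one I would be most careful about, is the converse, and within it the translation of \textbf{$\Phi_\Rset(G)$ is spanned} into the existence of a surjection $\omega'\colon R' \twoheadrightarrow G$ with $R' \in \Rset$. Everything else is bookkeeping about images of composites. It is exactly surjectivity of $\omega'$ that forces the image of the composite to \emph{equal} that of $\omega$ rather than merely be contained in it, so I would unwind the definition of the edge-assigning function of $\Phi_\Rset(G)$ to see that a spanning edge is an edge whose image is all of $V$, and therefore a surjective morphism, before invoking it.
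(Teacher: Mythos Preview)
Your proof is correct and follows essentially the same route as the paper's: both directions are handled identically, with the forward direction specializing to $H = G$ and invoking Remark~\ref{remark_omegas_sent_to_cliques}, and the converse using the spanning edge of $\Phi_\Rset(G)$ as a surjective morphism $\omega'\colon R' \to G$ to be composed with an arbitrary $\omega\colon G \to H$. Your version is slightly more explicit than the paper's in spelling out the reduction to matching edge vertex sets and in isolating surjectivity of $\omega'$ as the load-bearing point, but the argument is the same.
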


\begin{lemma}\label{lemma_connected_representation_hull}
    For any set of graphs $\Rset$ and any graph $G$, if $\Pi_{\Rset,k} = \Pi_{\Rset\cup\{G\},k}$, then $\Phi_\Rset(G)$ is $k$-ly connected. The reverse implication holds only if $k = 1$.
\end{lemma}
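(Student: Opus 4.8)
The plan is to establish three separate implications: the stated forward implication (which, like the easy half of Lemma~\ref{lemma_representation_hull}, is essentially immediate), the reverse implication in the case $k=1$ (the bulk of the work), and a counterexample showing that the reverse implication genuinely fails as soon as $k \geq 2$.

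For the forward direction, suppose $\Pi_{\Rset,k} = \Pi_{\Rset\cup\{G\},k}$. By Remark~\ref{remark_omegas_sent_to_cliques}, $\Phi_{\Rset\cup\{G\}}(G)$ is spanned, and a spanned graph is automatically $k$-ly connected: the spanning edge is a vertex of $\Lk$ whose connected component has union equal to the whole vertex set, and that union also exhausts the underlying set. Hence $\Pi_{\Rset\cup\{G\},k}(G)$ has $V(G)$ as a part, and by the hypothesis so does $\Pi_{\Rset,k}(G)$; that is, $\Phi_\Rset(G)$ is $k$-ly connected.

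For the reverse direction with $k=1$, I would first record the concrete description of $\Pi_1$ coming from Remark~\ref{remark:k1_case_nonoverlapping}: for a hypergraph $K$, the parts of $\Pi_1(K)$ are the classes of the equivalence relation on the vertices lying in some edge generated by ``$u,v$ lie in a common edge'', and the underlying set is $V(K)$. Now fix any hypergraph $H$. The graphs $\Phi_\Rset(H)$ and $\Phi_{\Rset\cup\{G\}}(H)$ have the same vertex set, and the edge-vertex-sets of the latter are those of the former together with the images $\omega_*(V(G))$ of all morphisms $\omega\colon G\to H$. By functoriality of $\Phi_\Rset$ (Lemma~\ref{lemma_representable_endofunctor_is_functor}) and of $\Pi_1$ (Lemma~\ref{lem:k_conn_is_functor}), the set function $\omega$ underlies a morphism $\Pi_1(\Phi_\Rset(\omega))$ in $\Pset$, so the hypothesis that $V(G)$ is a part of $\Pi_1(\Phi_\Rset(G))$ forces $\omega_*(V(G))$ to be contained in a single part of $\Pi_1(\Phi_\Rset(H))$. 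Adjoining to $\Phi_\Rset(H)$ extra edges each of whose vertex sets lies inside one part changes neither the covered vertices (every such vertex is already covered) nor the generated equivalence relation: any chain of pairwise-intersecting edges that passes through a new edge enters and leaves it at vertices of a single part, which are already equivalent in $\Phi_\Rset(H)$, so the chain can be rerouted without using the new edge. Hence $\Pi_{\Rset,1}(H)=\Pi_{\Rset\cup\{G\},1}(H)$, and since both functors act as the identity on morphisms, $\Pi_{\Rset,1}=\Pi_{\Rset\cup\{G\},1}$. The step needing the most care is this rerouting claim, and in particular the fact, special to $k=1$, that the parts of $\Pi_1$ are genuinely disjoint, so that ``enters and leaves at vertices of one part'' cannot secretly splice together two distinct parts.

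Finally, to show the implication fails for every $k \geq 2$, I would exhibit a counterexample based on $\Rset = \{E_n : n \in \N\}$, for which $\Phi_\Rset$ leaves the set of edge-vertex-sets of any hypergraph unchanged, so that $\Pi_{\Rset,k} = \Pi_k$. Take $G$ to consist of two $(k{+}1)$-element edges overlapping in exactly $k$ vertices, so that $\Phi_\Rset(G)$ is $k$-ly connected. Build $H$ from one isomorphic copy of $G$, whose span is a set $S$ that is not itself an edge of $H$, together with one further edge $e_3$ chosen so that $\abs{e_3 \cap S} = k$ while $e_3$ meets each of the two edges of the copy of $G$ in fewer than $k$ vertices, and so that $e_3 \not\subseteq S$. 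Then in $\Pi_k(H)$ the edge $e_3$ is its own $k$-ly connected component, giving two distinct non-nested parts; but in $\Phi_{\Rset\cup\{G\}}(H)$ the set $S$ occurs as an edge (the image of the copy of $G$), and since $\abs{S \cap e_3} \geq k$ it fuses $e_3$'s component into the large one, yielding a strictly different partition. The only work here is the arithmetic of intersection sizes and the check that the two partitioned sets are genuinely unequal (indeed not even isomorphic), both routine.
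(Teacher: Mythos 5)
Your proposal is correct, and the forward direction coincides with the paper's (spannedness of $\Phi_{\Rset\cup\{G\}}(G)$ via Remark~\ref{remark_omegas_sent_to_cliques}, transferred by the hypothesis). For the reverse direction at $k=1$, you and the paper both ultimately reroute a path in the $1$-line graph around each edge contributed by a copy of $G$, but you get there differently: the paper explicitly pushes forward a connecting path inside the spanning component of $\LOne(\Phi_\Rset(G))$ along the morphism $\omega$, whereas you invoke functoriality of $\Pi_{\Rset,1}$ to conclude that each new edge's vertex set $\omega_*(V(G))$ sits inside a single (old) part, and then use the fact, special to $k=1$, that these parts are disjoint equivalence classes, so the chain's entry and exit vertices are already equivalent. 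Your version is a little more abstract and delegates the combinatorial work to already-proven functoriality; the paper's is more hands-on but makes visible exactly which step breaks for $k>1$ (adjacency of the spliced-in edges to $e_p$ and $e_s$ needs only one shared vertex). For the failure of the converse, the paper exhibits a single pictorial counterexample with $\Rset=\{E_3\}$ at $k=2$ (Figure~\ref{fig:ineq_conn_hull}) and asserts it settles $k>1$, while your two-$(k{+}1)$-edges-overlapping-in-$k$-vertices construction, with the auxiliary edge $e_3$ meeting the span in exactly $k$ vertices split across the two edges, works uniformly for every finite $k\geq 2$; the intersection arithmetic checks out (take $e_3\cap S$ to be the two private vertices plus $k-2$ vertices of the common intersection). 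This is, if anything, a more complete treatment of that clause than the paper's.
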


\begin{proof}
    Suppose $\Pi_{\Rset,k} = \Pi_{\Rset\cup\{G\},k}$. We know from Remark~\ref{remark_omegas_sent_to_cliques} that $\Phi_{\Rset \cup \{G\}}(G)$ is spanned, which immediately gives that it is $k$-ly connected. So since $\Pi_{\Rset,k} = \Pi_{\Rset\cup\{G\},k}$, $\Phi_{\Rset \cup \{G\}}(G)$ and $\Phi_{\Rset}(G)$ have to have the same $k$-ly connected components, and therefore $\Phi_\Rset(G)$ is also $k$-ly connected, as desired.

    To lessen the potential confusion of terminology, we will refer to the vertices of the line graph as ``edges'' and avoid using the word edge for the edges of the line graph, only referring to paths in the line graph and to edges (i.e. line-graph-vertices) being adjacent in such paths. The word ``vertex'', similarly, refers exclusively to a vertex of $H$.

    In the other direction, for the $k = 1$ case, suppose $\Phi_\Rset(G)$ is $1$-ly connected. This means $\Upsilon(\Pi(\LOne(\Phi_{\Rset}(G))))$ has a part containing every vertex, and by definition of $\Upsilon$ and $\Pi$, this means there is some connected component $p$ of $\LOne(\Phi_{\Rset}(G))$ whose edges together contain every vertex. Let us call these edges $e_1, e_2, \ldots, e_n$, and observe that for each of these edges there exists an $R_i \in \Rset$ and an $\omega_i: R_i \to G$, where $\im(\omega_i) = \ea(e_i)$.

    So let $H$ be some arbitrary graph, for which we wish to show that $\Pi_{\Rset,1}(H) = \Pi_{\Rset\cup\{G\},1}(H)$. It is easily seen that $\Phi_\Rset(H)$ is a subgraph of $\Phi_{\Rset\cup\{G\}}(H)$, so it will suffice to show that whenever there is a path in the line graph of $\Phi_{\Rset\cup\{G\}}(H)$ between two edges $e, e'$, there is a path between them also in $\Phi_{\Rset}(H)$.

    So suppose we have such edges $e, e'$ and a path between them in $\LOne(\Phi_{\Rset\cup\{G\}}(H))$. If this path does not contain any edge induced by a copy of $G$ in $H$, then the same path exists also in $\LOne(\Phi_\Rset(H))$, and so we are done.

    So, assume $f$ is such an edge on this path, and say $\omega: G \to H$ is the morphism creating this edge. Let $e_p$ be the edge previous to $f$ on this path, and $e_s$ be the edge subsequent to $f$. Since these edges are adjacent, there has to exist $v_p \in \ea(e_p)\cap\ea(f)$ and $v_s \in \ea(f)\cap\ea(e_s)$.

    Now, since $f$ arises as a copy of $G$ in $H$, and the line graph of $G$ has a connected component covering it, whose members are the edges $e_i$, the following has to be true: There exists $i_1, i_2, \ldots, i_m$ such that $v_p \in \omega(\ea(e_{i_1}))$, $v_s \in \omega(\ea(e_{i_m}))$, and $e_{i_1}, e_{i_2}, \ldots, e_{i_m}$ form a path in the line graph of $G$.

    So, if we compose the morphisms $\omega_i$ with $\omega$, we get a collection of morphisms from graphs in $\Rset$ into $H$ -- that is, edges in $\Phi_\Rset(H)$. Call these edges $e'_1, e'_2, \ldots, e'_n$. Since $v_p \in \ea(e_p)$ and $v_p \in \ea(e'_{i_1})$, we must have that $e_p$ is adjacent to $e'_{i_1}$, and similarly $e_s$ is adjacent to $e'_{i_m}$. Notice that this is where we use the fact that $k = 1$ -- this is the step that fails for $k > 1$.

    It is easy to see that the rest of the edges on the path $e_{i_1}, e_{i_2}, \ldots, e_{i_m}$ are also still adjacent, and so we can replace $f$ on the path from $e$ to $e'$ by this longer path, getting a path that does not use $f$, and so is a path also in $\Phi_\Rset(H)$, which is what we needed to show.

    \begin{figure}
        \centering
        \subfloat[\centering The graph $G$]{
            {\includegraphics[width=0.35\textwidth,valign=c]{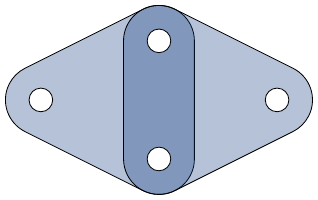}}
            \vphantom{\includegraphics[width=0.35\textwidth,valign=c]{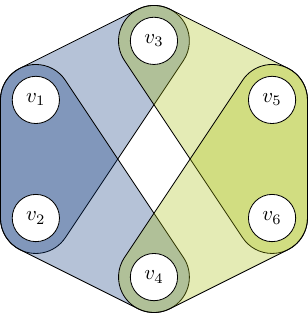}}
        }%
        \qquad
        \subfloat[\centering The graph $H$]{
            {\includegraphics[width=0.35\textwidth,valign=c]{graphics/ex_ineq_conn_hull_graphH.pdf}}
        }%
        \caption{Consider the situation where $\Rset$ consists of just $E_3$, the graph on three vertices with one edge containing all three vertices, and $G$ and $H$ are as in the figures. As we have seen previously, we will have $\Phi_\Rset(G)$ isomorphic to $G$ and likewise for $H$. It is clear that $\Phi_\Rset(G)$ is $2$-ly connected. $\Lambda_2(\Phi_\Rset(H))$ has two connected components, one containing the blue edges and one containing the green edges, and so $\Pi_{\Rset,2}(H)$ will have two parts, $\{v_1, v_2, v_3, v_4\}$ and $\{v_3, v_4, v_5, v_6\}$.
        However, when we add $G$ into $\Rset$, this will add some new edges into $\Phi_{\Rset\cup \{G\}}(H)$, with vertex sets $v_1, v_2, v_3, v_4$ and $v_3, v_4, v_5, v_6$. (There will be $4$ edges with each vertex set, due to the symmetries of $G$.) These two edge sets overlap in $v_3$ and $v_4$, and so these edges form a connected component of $\Lambda_2(\Phi_{\Rset\cup \{G\}}(H))$, which will be sent to a part containing all vertices. Thus $\Pi_{\Rset\cup\{G\},2}(H)$ is not equal to $\Pi_{\Rset,2}(H)$.}
        \label{fig:ineq_conn_hull}
    \end{figure}

    That the reverse implication fails for $k > 1$ is shown by the example in Figure~\ref{fig:ineq_conn_hull}. 
\end{proof}

With this result in hand, we can now give a proof of the existence of a representable but not finitely representable endofunctor.

\begin{proof}[Proof of Theorem~\ref{theorem_exists_not_finitely_representable}]
    Let, for each $i$, $R_i$ be the simple graph which consists of a triangle with a tail of $i$ vertices, as shown in Figure~\ref{figure_triangles_w_tails}, and let $\Rset = \left\{R_i \given i \in \N\right\}$. We claim that $\Pi_{\Rset,1}$ is not equal to $\Pi_{\Gset,1}$ for any finite $\Gset$.

    \begin{figure}
        \centering
        \includegraphics[width=0.95\textwidth]{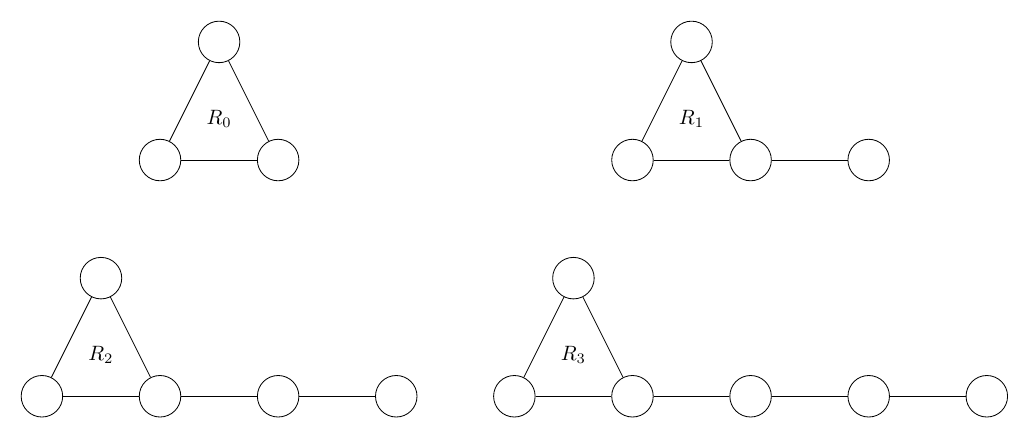}
        \caption{The family of graphs $\{R_i\}_{i=0}^3$.}
        \label{figure_triangles_w_tails}
    \end{figure}

    To prove this, let us first define
    $$\bar{\Rset} = \left\{G \in \G \given \Phi_\Rset(G) \text{ is } 1\text{-ly connected.}\right\}$$
    as the set of all graphs which $\Phi_\Rset$ sends to $1$-ly connected graphs. It follows from Lemma~\ref{lemma_connected_representation_hull} that $\Pi_{\bar{\Rset},1} = \Pi_{\Rset,1}$, and $\bar{\Rset}$ is the greatest set with this property.

    It is not hard to see that any graph in $\bar{\Rset}$ has to contain a triangle, since $\Phi_\Rset$ will send any triangle-free graph to a graph with no edges, which is obviously not connected.

    Now suppose for contradiction that $\Gset$ is a finite set of graphs such that $\Pi_{\Gset,1} = \Pi_{\bar{\Rset},1}$. Lemma~\ref{lemma_connected_representation_hull} implies that we must have $\Gset \subset \bar{\Rset}$, since if there were a $G \in \Gset \setminus \bar{\Rset}$ it would be sent to a $1$-ly connected graph by $\Phi_\Gset$, and thus also by $\Phi_{\bar{\Rset}}$, since we assumed $\Pi_{\Gset,1} = \Pi_{\bar{\Rset},1}$ -- but we chose $\bar{\Rset}$ so that it already contains everything sent to a connected graph by $\Phi_{\bar{\Rset}}$. So any graph $G \in \Gset$ contains a triangle.

    So, let
    $$r = \max_{\substack{G \in \Gset\\G\text{ simple}}} \max_{u \in V(G)} \min_{\substack{v \in V(G)\\v \text{ is in a triangle}}} d(u, v),$$
    where $d(u,v)$ is the distance between $u$ and $v$ in $G$. That is, $r$ is the furthest any vertex in any graph in $\Gset$ is from a triangle. We claim that $\Phi_\Gset$ will not send $R_{r + 1}$ to a $1$-ly connected graph, proving it is not equal to $\Phi_{\bar{\Rset}}$, giving our contradiction.

    That this is so is actually easy to see -- if $R_{r + 1}$ were sent to a $1$-ly connected graph, that must in particular mean there is an edge in $\Phi_\Gset(R_{r + 1})$ containing the tail vertex and its neighbour. So there is some morphism from some $G \in \Gset$ that hits both vertices -- and clearly there is never a morphism from a non-simple graph into a simple graph, and so this $G$ must be simple.
    
    Now, we know that $G$ contains a triangle, which has to be sent by this morphism onto the triangle in $R_{r + 1}$, since that is the only place it can go -- and since the morphism is by definition injective, this $G$ cannot contain more than one triangle, since otherwise both triangles would have to be sent onto the one triangle in $R_{r+1}$. Then, however, this morphism cannot also hit the tail of $R_{r+1}$ -- the tail is further away from the triangle than any vertex in any simple $G \in \Gset$ is from a triangle. Thus, no such morphism can exist, and we are done.
\end{proof}

\begin{remark}
    The construction in our proof of course works equally as well replacing the triangle by any connected graph that is not a line. So we have found a large family of examples of classes of graphs that give representable but not finitely representable endofunctors.

    This gives rise to a more general structural question. If we say that a graph class $\mathcal{C}$ is \emph{representable} if there exists some $\Rset$ such that $\bar{\Rset} = \mathcal{C}$, we can ask ourselves which classes of graph are representable, and which are finitely representable. Our proof above essentially relied on showing that the class of connected graphs containing a triangle is representable, but not finitely representable.

    We have shown before that the connected simple graphs are representable, with $\Rset = \{K_2\}$, and a similar construction will get the classes of connected hypergraphs of different uniformities. The example we gave in the proof easily generalises to showing that the connected graphs of girth at most $k$ are representable but not finitely representable, by extending from triangles with tails to all cycles of length at most $k$ with tails.

    The class of non-planar graphs is also representable, with itself as a representation, since there can of course be no morphism from a non-planar graph into a planar graph. Whether there exists a finite representing set is a potentially interesting question, which we leave unanswered.
\end{remark}

\section{Equivalence of representability and excisiveness}

As we saw in Remark~\ref{remark:k1_case_nonoverlapping}, we get the setting of non-overlapping partitions by setting $k=1$. In this simpler setting it is known that representability is equivalent to excisiveness plus functoriality~\cite{Carlsson_Memoli_Segarra_2021}.

In the case where $k > 1$, more interesting things start happening, so we end up not quite getting equivalence, since one of the directions of the equivalence weakens slightly. The right notion turns out to be that of one clustering scheme \emph{refining} another, which is a stronger notion than isomorphism but not quite equality.

\begin{definition}
    For any two clustering schemes $\Clust_1, \Clust_2: \G \to \Pset$ we say that $\Clust_1$ \emph{refines} $\Clust_2$ if the following two properties hold for every $G \in \G$:
    \begin{enumerate}
        \item Every part $p$ of $\Clust_1(G)$ is contained in some part $p'$ of $\Clust_2(G)$.
        \item Every part $p$ of $\Clust_2(G)$ is also a part of $\Clust_1(G)$.
    \end{enumerate}
\end{definition}

\begin{remark}
    Essentially, what this notion is saying is that for every graph $G$, $\Clust_1(G)$ is $\Clust_2(G)$, except possibly with some spurious parts added. Thus, this is a weaker notion than outright equality of clustering schemes. 
    
    However, this means that if $\Clust_1$ refines $\Clust_2$, it holds for every graph $G = (V,E)$ that the identity function on $V$ is an isomorphism between $\Clust_1(G)$ and $\Clust_2(G)$. Thus, if $\Clust_1$ refines $\Clust_2$ and both are functorial, the two clustering schemes are in fact naturally isomorphic as functors from $\G$ to $\Pset$.
\end{remark}

\begin{remark}
    If both $\Clust_1$ and $\Clust_2$ send graphs to non-overlapping partitions, it is easily seen that $\Clust_1$ refines $\Clust_2$ if and only if $\Clust_1 = \Clust_2$, so it should not be too surprising that we get refinement of clustering schemes instead of equality when we generalize from the non-overlapping case.
\end{remark}

\begin{theorem} \label{main_theorem}
    Any representable clustering scheme is excisive and functorial. Any excisive and functorial clustering scheme is refined by a representable clustering scheme, and so in particular is isomorphic to a representable clustering scheme.
\end{theorem}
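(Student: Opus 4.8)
The plan is to prove the two halves separately, both powered by a single preliminary observation: the functor $\Phi_\Rset$ commutes with restriction to a vertex subset. Concretely, for any graph $G$ and any $S\subseteq V(G)$, the graph $\Phi_\Rset(G|_S)$ is canonically identified with the induced subgraph $\Phi_\Rset(G)|_S$, because an edge of $\Phi_\Rset(G)$ is a morphism $\omega\colon R\to G$ whose vertex set is $\im(\omega)$, and $\im(\omega)\subseteq S$ holds exactly when $\omega$ factors, as the same underlying set function, through the inclusion $G|_S\hookrightarrow G$; conversely, post-composing any morphism $R\to G|_S$ with that inclusion gives a morphism $R\to G$ with image inside $S$. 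I would record this as a lemma, so that in particular the vertex sets of edges of $\Phi_\Rset(G|_S)$ are precisely those vertex sets of edges of $\Phi_\Rset(G)$ that lie inside $S$.

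\textbf{Representable $\Rightarrow$ excisive and functorial.} Functoriality is free: a representable scheme $\Pi_{\Rset,k}=\Pi_k\circ\Phi_\Rset$ is a composite of the functors of Lemmas~\ref{lemma_representable_endofunctor_is_functor} and~\ref{lem:k_conn_is_functor}. For excisiveness, take a part $p$ of $\Pi_{\Rset,k}(G)=\Pi_k(\Phi_\Rset(G))$. Unwinding $\Pi_k=\Upsilon\circ\Pi\circ\Lk$, there is a connected component $C$ of the $k$-line graph $\Lk(\Phi_\Rset(G))$ with $p=\upsilon(C)=\bigcup_{x\in C}x$, so every $x\in C$, being the vertex set of an edge of $\Phi_\Rset(G)$, satisfies $x\subseteq p$; by the preliminary lemma each such $x$ is also a vertex of $\Lk(\Phi_\Rset(G|_p))$, and since adjacency of two vertices $u,v$ in any $k$-line graph is the intrinsic condition $\abs{u\cap v}\geq k$, the set $C$ spans in $\Lk(\Phi_\Rset(G|_p))$ the same connected subgraph it spanned in $\Lk(\Phi_\Rset(G))$. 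The one nontrivial step is maximality: any neighbour in $\Lk(\Phi_\Rset(G|_p))$ of a vertex of $C$ is a vertex of $\Lk(\Phi_\Rset(G))$ adjacent to a vertex of $C$, hence already in $C$. Thus $C$ is a connected component of $\Lk(\Phi_\Rset(G|_p))$, so $p=\upsilon(C)$ is a part of $(\Upsilon\circ\Pi\circ\Lk)(\Phi_\Rset(G|_p))=\Pi_{\Rset,k}(G|_p)$. The degenerate cases $p=\emptyset$ and $p$ a singleton reduce to the same lemma.

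\textbf{Excisive and functorial $\Rightarrow$ refined by a representable scheme.} Given an excisive functorial $\Clust$, I would set
$$\Rset=\left\{R\in\G \given V(R)\text{ is a part of }\Clust(R)\right\}$$
(taking one representative per isomorphism class so that this is genuinely a set), and crucially take $k=\infty$. Since $\Lambda_\infty$ has no edges, $\Pi(\Lambda_\infty(H))$ is the discrete partition of the set of vertex sets of edges of $H$, and $\Upsilon$ returns each such vertex set as its own part; so the parts of $\Pi_{\Rset,\infty}(G)$ are exactly the sets $\im(\omega)$ with $\omega\in\hom(R,G)$, $R\in\Rset$. Now verify refinement. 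If $q=\im(\omega)$ is such a part, functoriality of $\Clust$ makes $\omega$ a morphism $\Clust(R)\to\Clust(G)$ in $\Pset$, so the part $V(R)$ of $\Clust(R)$ is carried into some part of $\Clust(G)$, i.e. $q$ is contained in a part of $\Clust(G)$. Conversely, if $p$ is a part of $\Clust(G)$, then by excisiveness $p=V(G|_p)$ is a part of $\Clust(G|_p)$, so $G|_p\in\Rset$, and the inclusion $G|_p\hookrightarrow G$ is a morphism out of a graph in $\Rset$ with image $p$; hence $p$ is a part of $\Pi_{\Rset,\infty}(G)$. Therefore $\Pi_{\Rset,\infty}$ refines $\Clust$, and since a refinement makes the identity on $V$ an isomorphism $\Pi_{\Rset,\infty}(G)\cong\Clust(G)$ for every $G$ and both are functorial, the two are naturally isomorphic as functors $\G\to\Pset$.

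\textbf{Expected obstacle.} The subtle decision is to take $k=\infty$. With a finite $k$ the second half breaks: two parts of $\Clust(G)$ that happen to share at least $k$ vertices would be fused by $\Lambda_k$ into a larger part of $\Pi_{\Rset,k}(G)$ which need not sit inside any part of $\Clust(G)$, violating the first refinement condition. Passing to $k=\infty$ removes all fusion, at the cost that $\Pi_{\Rset,\infty}(G)$ carries extra spurious parts — the $\Clust$-connected subgraphs lying strictly inside a part — and this is exactly why the clean equality of the non-overlapping classification weakens to refinement (equivalently, to isomorphism). The remaining points I expect to need care with are the set-theoretic size of $\Rset$, the degenerate empty/singleton parts in the first half, and checking that "factoring through $G|_S$" really is a vertex-set-preserving bijection of the relevant edge sets.
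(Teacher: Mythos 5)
Your proposal is correct and follows essentially the same route as the paper: the commutation lemma $\Phi_\Rset(G|_S)=\Phi_\Rset(G)|_S$ is the paper's Claim~\ref{claim:phi_commutes_with_restriction}, your line-graph argument (including the maximality step, which you actually spell out more explicitly than the paper does) matches Claim~\ref{claim:pik_excisive_Lk_subgraph} inside Proposition~\ref{lem:pi_k_is_excisive}, and your converse direction with $\Rset=\{R : V(R)\text{ a part of }\Clust(R)\}$ and $k=\infty$ is exactly Lemma~\ref{lemma_exc_and_funct_implies_repr} together with Lemma~\ref{lemma_pi_infty_parts}. The only cosmetic difference is that the paper factors the excisiveness half through a slightly more general intermediate statement (composing a representable endofunctor with \emph{any} excisive scheme preserves excisiveness), whereas you argue directly for $\Pi_k\circ\Phi_\Rset$; the mathematical content is the same.
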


We divide the proof of Theorem~\ref{main_theorem} into a collection of lemmas and examples, with the lemmas proving the stated implications and the examples demonstrating the existence of clustering schemes with every combination of properties that our lemmas do not forbid. We have already seen from Lemmas~\ref{lemma_representable_endofunctor_is_functor} and~\ref{lem:k_conn_is_functor} that representability implies functoriality, so it remains to see that it also implies excisiveness, and then to show that every excisive and functorial clustering scheme is refined by a representable clustering scheme.

A representable clustering scheme is by definition a composition of a representable endofunctor with the connected components functor -- it turns out that in general, composing a representable endofunctor with any excisive clustering scheme will give another excisive clustering scheme, so this allows us to break the problem of showing that representable clustering schemes are excisive into two parts.

\begin{lemma}\label{lem:endofunctor_circ_excisive_is_excisive}
    For any representable endofunctor $\Phi_\Rset: \G \to \G$ and any excisive clustering scheme $\Clust: \G \to \Pset$, $\Phi_\Rset \circ \Clust$ is also an excisive clustering scheme.
\end{lemma}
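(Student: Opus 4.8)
The plan is to reduce excisiveness of the composite to excisiveness of $\Clust$, via the fact that the representable endofunctor $\Phi_\Rset$ commutes with restriction to a vertex subset. First observe that the composite sends a graph $G = (V,E,\ea)$ to $\Clust(\Phi_\Rset(G))$, whose underlying set is again $V$ since both $\Phi_\Rset$ and $\Clust$ fix the underlying vertex set; so it is a genuine clustering scheme and only excisiveness is in question.

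\emph{The key claim} is that for every graph $G = (V,E,\ea)$ and every $p \subseteq V$ there is an equality of graphs $\Phi_\Rset(G|_p) = (\Phi_\Rset(G))|_p$. I would prove this by matching edges: both sides have vertex set $p$, an edge of $(\Phi_\Rset(G))|_p$ is (by definition of restriction) a morphism $\omega \colon R \to G$ with $R \in \Rset$ and $\im(\omega) \subseteq p$, and an edge of $\Phi_\Rset(G|_p)$ is a morphism $R \to G|_p$. Any morphism $R \to G|_p$, post-composed with the inclusion $G|_p \hookrightarrow G$, gives a morphism $R \to G$ with image inside $p$; conversely, if $\omega \colon R \to G$ has $\im(\omega) \subseteq p$ then for each edge $e$ of $R$ there is an edge $e'$ of $G$ with vertex set $\omega_*(\ea_R(e)) \subseteq \im(\omega) \subseteq p$, so that $e'$ already lives in $G|_p$ with the same vertex set, and hence $\omega$ is a morphism $R \to G|_p$. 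Thus the edge sets agree, and since the edge-assignment in each graph sends a morphism to its image, the two graphs are literally equal.

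Granting the key claim, excisiveness of the composite is immediate: fix $G$ and a part $p$ of $\Clust(\Phi_\Rset(G))$; applying excisiveness of $\Clust$ to the graph $\Phi_\Rset(G)$ and its part $p$ shows that $p$ is a part of $\Clust((\Phi_\Rset(G))|_p)$, which by the key claim equals $\Clust(\Phi_\Rset(G|_p))$, i.e. the value of the composite at $G|_p$.

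I expect the only real content to be the key claim, and within it the subtlety that a morphism out of $R$ carries the vertex set of each edge of $R$ onto a genuine edge, so that capping the image by $p$ automatically forces the matched edges of $G$ to survive in $G|_p$; without this, $(\Phi_\Rset(G))|_p$ could a priori have strictly fewer edges than $\Phi_\Rset(G|_p)$, and the reduction would fail. Note also that functoriality of $\Clust$ is never used — only its excisiveness — so composing with the functor $\Phi_\Rset$ transports excisiveness even for clustering schemes that are not functorial.
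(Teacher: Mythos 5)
Your proposal is correct and follows essentially the same route as the paper: the same key claim that $\Phi_\Rset(G|_p) = \Phi_\Rset(G)|_p$, followed by the same one-line reduction to excisiveness of $\Clust$ applied to $\Phi_\Rset(G)$. The only cosmetic difference is that you verify both inclusions of edge sets by hand, whereas the paper gets one direction for free from functoriality of $\Phi_\Rset$; your closing observations (that only excisiveness of $\Clust$ is used, and that the matched edge of $G$ genuinely survives in $G|_p$) are accurate and consistent with the paper's argument.
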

\begin{proof}
    The proof of this result hinges on the following claim:
    \begin{claim}\label{claim:phi_commutes_with_restriction}
        For any $G = (V,E)$ and any $p \subseteq V$, $\Phi_\Rset(G|_p) = \Phi_\Rset(G)|_p$.
    \end{claim}
    \begin{proof}
        Let $G = (V,E)$ be some graph, and let $p \subseteq V$ be arbitrary. We need to show that $\Phi_\Rset(G|_p) = \Phi_\Rset(G)|_p$. That they have the same vertex sets is obvious, and since $G|_p$ is a subgraph of $G$ and $\Phi_\Rset$ is a functor, we must have that $\Phi_\Rset(G|_p)$ is a subgraph of $\Phi_\Rset(G)$. So if we can show that any edge of $\Phi_\Rset(G)|p$ is also an edge of $\Phi_\Rset(G|_p)$, we will be done.

        So suppose $e$ is such an edge, and take $R \in \Rset$ and $\omega: R \to G$ to witness this edge. We must have that $\omega(R) \subseteq p$ -- but this means we can simply consider $\omega$ as a morphism into $G|_p$, and so it will also be an edge of $\Phi_\Rset(G|_p)$, as needed.
    \end{proof}

    With this claim in hand, proving our lemma is nearly trivial -- so let $G = (V,E)$ be some graph, and $p$ be some part of $\Clust(\Phi_R(G))$. We need to show that $p$ is a part of $\Clust(\Phi_R(G|_p))$. So if we let $H = \Phi_R(G)$, and observe that then $\Phi_R(G|_p) = H|_p$ by our claim and $p$ is a part of $\Clust(H)$ by assumption, the result is immediate by excisiveness of $\Clust$. 
\end{proof}

Proving that the $k$-ly connected components functor is excisive will be considerably trickier, because of the involvement of the non-functor $\Upsilon$.

\begin{prop}\label{lem:pi_k_is_excisive}
    The $k$-ly connected components functor $\Pi_k$ is excisive.
\end{prop}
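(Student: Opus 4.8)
The plan is to unwind the definitions and reduce the statement to an elementary fact about connected components under passage to induced subgraphs. Fix a graph $G = (V, E, \ea)$ and a part $p$ of $\Pi_k(G)$. Unwinding the definition of $\Pi_k$, this means that $p = \upsilon(P)$ for some connected component $P$ of the simple graph $\Lk(G)$; here $P$ is a set of vertex sets of edges of $G$ and $p = \bigcup_{u \in P} u$. Since the parts of $\Pi_k(G|_p)$ are exactly the parts of $(\Upsilon \circ \Pi \circ \Lk)(G|_p)$, showing that $p$ is a part of $\Pi_k(G|_p)$ amounts to showing that $P$ is a connected component of $\Lk(G|_p)$.

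First I would observe that $\Lk(G|_p)$ is literally the restriction (i.e.\ induced subgraph) of $\Lk(G)$ to the vertex subset $S := \left\{\ea(e) \given e \in E,\ \ea(e) \subseteq p\right\}$. Indeed, by the definition of restriction the edges of $G|_p$ are precisely those $e \in E$ with $\ea(e) \subseteq p$, so $V(\Lk(G|_p)) = S \subseteq V(\Lk(G))$, and for $u, v \in S$ the adjacency test $\abs{u \cap v} \geq k$ gives the same answer whether performed in $\Lk(G|_p)$ or in $\Lk(G)$; hence $\Lk(G|_p) = \Lk(G)|_S$. Moreover $P \subseteq S$, since every $u \in P$ is the vertex set of an edge of $G$ and satisfies $u \subseteq \bigcup_{u' \in P} u' = p$.

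Next I would verify the general fact that a connected component of a graph which happens to lie entirely inside a vertex subset is also a connected component of the induced subgraph on that subset; applied to $P \subseteq S \subseteq V(\Lk(G))$, this gives that $P$ is a connected component of $\Lk(G)|_S = \Lk(G|_p)$. Connectedness of $P$ inside $\Lk(G|_p)$ is immediate because the subgraph of $\Lk(G)|_S$ induced on $P$ equals the (connected) subgraph of $\Lk(G)$ induced on $P$. Maximality holds because any vertex of $\Lk(G)|_S$ adjacent to a vertex of $P$ is in particular a vertex of $\Lk(G)$ adjacent to a vertex of $P$, hence already lies in $P$, as $P$ is a connected component of $\Lk(G)$. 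Therefore $P$ is a connected component of $\Lk(G|_p)$, so $p = \upsilon(P)$ is a part of $\Pi(\Lk(G|_p))$ pushed forward along $\upsilon$, i.e.\ a part of $(\Upsilon \circ \Pi \circ \Lk)(G|_p) = \Pi_k(G|_p)$; this is consistent, as the underlying set of $\Pi_k(G|_p)$ is $V(G|_p) = p$.

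The one subtlety — and presumably the reason the involvement of the non-functor $\Upsilon$ makes this trickier than the non-overlapping analogue — is that $G|_p$ may contain edges whose vertex sets are not in $P$, so that $\Lk(G|_p)$ can have vertices, and hence components, beyond $P$; the maximality argument above is exactly the assertion that these extra vertices cannot attach themselves to $P$ and so do no harm. I do not expect any genuinely hard step beyond this bookkeeping about the induced-subgraph structure.
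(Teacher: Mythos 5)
Your proof is correct and follows essentially the same route as the paper's: both hinge on identifying $\Lk(G|_p)$ with the restriction of $\Lk(G)$ to the vertex sets of edges lying inside $p$, and then observing that the connected component $P$ of $\Lk(G)$ inducing $p$ survives as a component there. If anything, your explicit maximality argument (no vertex of $\Lk(G|_p)$ outside $P$ can attach to $P$, since adjacency is the same relation as in $\Lk(G)$) spells out a step the paper handles only implicitly via its ``sub-partitioned-set'' phrasing.
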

\begin{proof}
    Let $G = (V,E)$ be some graph, and assume $\Pi_k(G) = (V,P)$. Let $p \in P$ be some part -- we need to show that $p$ is a part of $\Pi_k(G|_p)$.

    Let $p'$ be the edge set of $G|_p$, and let $p''$ be some connected component of $\Lk(G)$ which is sent to $p$ by $\upsilon$. (By Example~\ref{example:equal_parts_different_edgesets} this need not, surprisingly enough, be uniquely determined by $p$.) Notice that we have $p'' \subseteq p'$, but it can be a strict subset when $k > 1$, as demonstrated by the example in Figure~\ref{fig:overlapping_parts}.

    \begin{figure}
        \centering
        \includegraphics[width=0.5\textwidth]{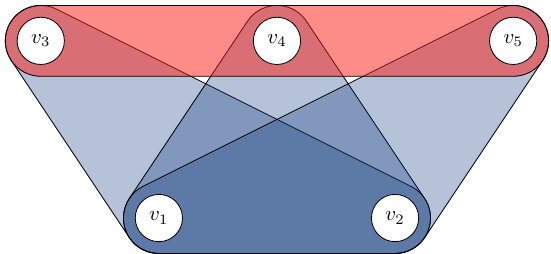}
        \caption{An example of a graph $G$ with a $2$-connected component $p$ (here, the entire vertex set) where $p''$ is a strict subset of $p'$, in the notation of the proof of Proposition~\ref{lem:pi_k_is_excisive}. In particular, we note that $p''$ consists of the three blue edges, which form a clique in the $2$-line graph since they all overlap in the vertices $v_1$ and $v_2$, but the $2$-line graph also has an isolated vertex corresponding to the red edge. This red edge is in $p'$, but not in $p''$, and will be sent to a strict subpart of $p$ by $\Upsilon$.}
        \label{fig:overlapping_parts}
    \end{figure}

    They key to this result is the following claim:
    \begin{claim}
        For any set $q'' \subseteq E$ of edges, let $q$ be the union of their vertex sets. Then it holds that $\Lk(G)|_{q''}$ is a subgraph of $\Lk(G|_q)$.\label{claim:pik_excisive_Lk_subgraph}
    \end{claim}

    Before we prove the claim, let us show why it will prove our proposition. By construction, the union of the vertex sets of the edges in $p''$ are precisely $p$, so our claim gives us that $\Lk(G)|_{p''}$ is a subgraph of $\Lk(G|_p)$. This in turn means, by functoriality of $\Pi$, that $\Pi(\Lk(G)|_{p''})$ is a sub-partitioned-set of $\Pi(\Lk(G|_p))$.

    Now it is easy to see that $\Upsilon(\Pi(\Lk(G)|_{p''}))$ and $\Upsilon(\Pi(\Lk(G|_p)))$ have the same underlying set, and since we have just seen that the parts of $\Pi(\Lk(G)|_{p''})$ are a subset of the parts of $\Pi(\Lk(G|_p))$, we get that $\Upsilon(\Pi(\Lk(G)|_{p''}))$ is a sub-partitioned-set of $\Upsilon(\Pi(\Lk(G|_p)))$, and the latter is just $\Pi_k(G|_p)$.

    So consider $\Upsilon(\Pi(\Lk(G)|_{p''}))$. By assumption $p''$ is a connected component of $\Lk(G)$, which is sent to $p$ by $\upsilon$, and so we must have that $\Upsilon(\Pi(\Lk(G)|_{p''}))$ is precisely $(p,\{p\})$. So this shows that $p$ is a part of $\Pi_k(G|_p)$, as desired.

    Finally, let us give a proof of the claim.

    \paragraph*{Proof of Claim~\ref{claim:pik_excisive_Lk_subgraph}:} Let $q'$ be the edge set of $G|_q$. That $q'' \subseteq q'$ is obvious, so the vertices of $\Lk(G)|_{q''}$ are a subset of the vertices of $\Lk(G|_q)$. So suppose $e, f \in q''$ are adjacent in $\Lk(G)$. By definition of the line graph functor, this means that $\ea(e)\cap\ea(f)$ has at least $k$ members -- but clearly $\ea(e), \ea(f) \subseteq q$, and so this intersection has the required size also in $G|_q$, and so these edges are adjacent also in $\Lk(G|_q)$.
\end{proof}

With Lemma~\ref{lem:endofunctor_circ_excisive_is_excisive} and Lemma~\ref{lem:pi_k_is_excisive} in hand, it is an easy corollary that all representable clustering schemes are excisive.

\begin{lemma} \label{lemma_repr_implies_excisive}
    Any representable clustering scheme is excisive.
\end{lemma}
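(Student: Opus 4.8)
The plan is to observe that this is an immediate consequence of the two preceding results, since by Definition~\ref{definition_representable_clustscheme} a representable clustering scheme is precisely a composite $\Pi_{\Rset,k} = \Pi_k \circ \Phi_\Rset$ of a representable endofunctor followed by a $k$-ly connected components functor, for some set $\Rset$ of graphs and some $k \geq 1$.

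So first I would recall from Proposition~\ref{lem:pi_k_is_excisive} that $\Pi_k$ is itself an excisive clustering scheme, for every $k$. This puts us exactly in the situation covered by Lemma~\ref{lem:endofunctor_circ_excisive_is_excisive}: taking the excisive clustering scheme appearing there to be $\Pi_k$ itself and the representable endofunctor to be $\Phi_\Rset$, that lemma tells us that the clustering scheme $G \mapsto \Pi_k(\Phi_\Rset(G))$ — the composite of $\Phi_\Rset$ followed by $\Pi_k$, in the order in which the types match — is again excisive. But this composite is by definition precisely $\Pi_{\Rset,k}$, and every representable clustering scheme is of this form, so every representable clustering scheme is excisive.

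There is no real obstacle here: all the genuine work has already been done, in Lemma~\ref{lem:endofunctor_circ_excisive_is_excisive} (whose heart is Claim~\ref{claim:phi_commutes_with_restriction}, that $\Phi_\Rset$ commutes with restriction to an induced subgraph) and in Proposition~\ref{lem:pi_k_is_excisive} (whose heart is Claim~\ref{claim:pik_excisive_Lk_subgraph}, comparing $\Lk(G)|_{q''}$ with $\Lk(G|_q)$). The only point requiring the slightest care is the direction of composition — we need $\Phi_\Rset$ applied first and $\Pi_k$ applied to its output — but this is the only order in which the domains and codomains are compatible, so no ambiguity can arise, and the proof is a one-line appeal to the two cited results.
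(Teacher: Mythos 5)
Your proof is correct and is exactly the paper's argument: the paper also obtains this lemma as an immediate corollary of Lemma~\ref{lem:endofunctor_circ_excisive_is_excisive} and Proposition~\ref{lem:pi_k_is_excisive} applied to the decomposition $\Pi_{\Rset,k} = \Pi_k \circ \Phi_\Rset$. Your remark about the order of composition is well taken, since the statement of Lemma~\ref{lem:endofunctor_circ_excisive_is_excisive} writes the composite as $\Phi_\Rset \circ \Clust$ while its proof makes clear that $\Phi_\Rset$ is applied first; you resolve this in the only type-correct way, matching the paper's intent.
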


\begin{lemma} \label{lemma_pi_infty_parts}
    For any graph $G$, the parts of $\Pi_\infty(G)$ are precisely the vertex sets of the edges of $G$.
\end{lemma}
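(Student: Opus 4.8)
The plan is simply to compute $\Pi_\infty(G)$ by unwinding its definition $\Pi_\infty = \Upsilon \circ \Pi \circ \Lambda_\infty$ (on parts, with underlying set $V$), exploiting the fact that for $k = \infty$ the line graph degenerates completely.

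First I would note that $\Lambda_\infty(G)$ is the edgeless simple graph on the vertex set $\left\{\ea(e) \given e \in E\right\}$: by the definition of $\Lambda_k$, two line-graph vertices $u,v$ are adjacent exactly when $\abs{u \cap v} \geq \infty$, which never happens since edge vertex-sets are finite — this is precisely the trivial case flagged in that definition. Second, since the connected components of an edgeless graph are exactly its singletons, $\Pi(\Lambda_\infty(G))$ is the partitioned set whose underlying set is $\left\{\ea(e) \given e \in E\right\}$ and whose parts are the singletons $\{\ea(e)\}$, one for each distinct edge vertex-set. Third, applying $\Upsilon$: for each such singleton part, $\upsilon(\{\ea(e)\}) = \bigcup_{x \in \{\ea(e)\}} x = \ea(e)$, so the parts of $\Upsilon(\Pi(\Lambda_\infty(G)))$ are exactly the sets $\ea(e)$ for $e \in E$. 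Since $\Pi_\infty(G)$ has by definition the same parts as $\Upsilon(\Pi(\Lambda_\infty(G)))$ — replacing the underlying set by $V$ only affects which isolated vertices of $G$ are left uncovered, not the part set — this is the claim.

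The argument is pure definition-chasing, so I do not expect a genuine obstacle; the one point needing a moment's attention is the bookkeeping at the $\Upsilon$ step, where the elements of the underlying set of $\Pi(\Lambda_\infty(G))$ are themselves the sets $\ea(e)$, so its parts are singletons whose sole element is a set, and one must check that $\upsilon$ unwraps $\{\ea(e)\}$ back to $\ea(e)$ exactly. I would also record explicitly that $\Pi$ here is the usual connected-components functor, which assigns each (here isolated) vertex of $\Lambda_\infty(G)$ a singleton part; the loss of isolated vertices discussed around Remark~\ref{remark:k1_case_nonoverlapping} concerns isolated vertices of $G$, which never become vertices of $\Lambda_\infty(G)$ in the first place, and so is irrelevant here.
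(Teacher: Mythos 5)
Your argument is correct and is essentially identical to the paper's own proof: both note that $\Lambda_\infty(G)$ is edgeless, so its connected components are the singletons $\{\ea(e)\}$, which $\upsilon$ sends back to the vertex sets $\ea(e)$. Your extra bookkeeping about the underlying set and isolated vertices is accurate but not needed beyond what the paper records.
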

\begin{proof}
    This is trivial once we notice that $\Lambda_\infty(G)$ has no edges, so its connected components are all singleton sets consisting of a single vertex, that is, of a single edge of $G$. When we then apply $\upsilon$ to these, we get the vertex sets of these edges.
\end{proof}

\begin{lemma} \label{lemma_exc_and_funct_implies_repr}
    Any excisive and functorial clustering scheme is refined by a representable clustering scheme.
\end{lemma}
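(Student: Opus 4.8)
The plan is to produce an explicit representable clustering scheme that refines $\Clust$, exploiting the freedom to choose $k=\infty$, for which the connected-components machinery is completely transparent: by Lemma~\ref{lemma_pi_infty_parts}, the parts of $\Pi_\infty(H)$ are exactly the vertex sets of the edges of $H$. Concretely, I would set
\[
\Rset = \left\{ R \in \G \given V(R) \text{ is a part of } \Clust(R) \right\},
\]
taking one representative per isomorphism class so that $\Rset$ is a genuine set and $\Phi_\Rset$ is well defined (the representatives are what ensure $\Phi_\Rset(G)$ has finitely many edges). One first checks that the defining property is isomorphism-invariant: if $\phi : R \to R'$ is an isomorphism, then $\Clust(\phi) = \phi$ is a $\Pset$-isomorphism carrying the part $V(R)$ into a part of $\Clust(R')$, and since $\phi_*(V(R)) = V(R')$ this forces $V(R')$ itself to be a part of $\Clust(R')$. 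I then claim that $\Pi_{\Rset,\infty}$ refines $\Clust$, which proves the lemma and, by the remark following the definition of refinement, also supplies the ``isomorphic to'' conclusion of Theorem~\ref{main_theorem}.

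By Lemma~\ref{lemma_pi_infty_parts} applied to $\Phi_\Rset(G)$, together with the definition of $\Phi_\Rset$, the parts of $\Pi_{\Rset,\infty}(G)$ are precisely the images $\omega_*(V(R))$, as $R$ ranges over $\Rset$ and $\omega$ over $\hom(R,G)$. To verify the first refinement condition, take such a part $p = \omega_*(V(R))$. Since $R \in \Rset$, $V(R)$ is a part of $\Clust(R)$; by functoriality $\Clust(\omega) = \omega$ is a morphism $\Clust(R) \to \Clust(G)$ in $\Pset$, so there is a part $q$ of $\Clust(G)$ with $p = \omega_*(V(R)) \subseteq q$. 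For the second condition, take a part $p$ of $\Clust(G)$. Excisiveness gives that $p$ is a part of $\Clust(G|_p)$, and since $V(G|_p) = p$ this says exactly that $G|_p$ satisfies the defining property of $\Rset$; composing the isomorphism from $G|_p$ to its representative in $\Rset$ with the inclusion $G|_p \hookrightarrow G$, we obtain a morphism from a graph in $\Rset$ into $G$ whose image is $p$, so $p$ is the vertex set of an edge of $\Phi_\Rset(G)$ and hence a part of $\Pi_{\Rset,\infty}(G)$. These two statements are precisely the assertion that $\Pi_{\Rset,\infty}$ refines $\Clust$.

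I do not expect a substantial obstacle; the content is in two observations. The first is that one must take $k=\infty$ and not $k=1$: with any finite $k$, the chaining behaviour of the line graph would merge the part $p$ with the overlapping parts that the second refinement condition also forces into $\Pi_{\Rset,k}(G)$ — exactly the phenomenon responsible for the failure of the reverse implication in Lemma~\ref{lemma_connected_representation_hull} — so $p$ would not survive as a part; choosing $k=\infty$ collapses $\Pi_\infty$ to ``parts equal edge supports'' and removes the difficulty. The second is the recognition that excisiveness is precisely the hypothesis placing $G|_p$ into the representing set. The only fiddly points are the set-theoretic bookkeeping around isomorphism representatives and the invariance check above, both routine. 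This also explains why we land on refinement rather than equality: were $\Clust$ valued in $\Psetno$ it would have no spurious parts and refinement would upgrade to equality, matching Remark~\ref{remark:k1_case_nonoverlapping}, but for general overlapping $\Clust$ the scheme $\Pi_{\Rset,\infty}(G)$ may carry additional spurious parts.
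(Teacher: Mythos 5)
Your proposal is correct and follows essentially the same route as the paper's own proof: the same representing set $\Rset$ of graphs whose full vertex set is a part of their own clustering, the same choice $k=\infty$ via Lemma~\ref{lemma_pi_infty_parts}, and the same two verifications using excisiveness (for parts of $\Clust(G)$ appearing in $\Pi_{\Rset,\infty}(G)$) and functoriality (for the reverse containment). The only additions — the isomorphism-class bookkeeping and the discussion of why $k=\infty$ rather than finite $k$ — are sound but not part of the paper's argument, which explicitly elides the set-vs-class issue in its appendix.
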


\begin{proof}
    Let $\Clust$ be some excisive and functorial clustering scheme, and let
    $$\Rset = \left\{G = (V,E) \in \G \given V\text{ is a part of }\Clust(G)\right\}.$$
    We will show that $\Pi_{\Rset,\infty}$ refines $\Clust$. So, let $G$ be some graph. There are two things we need to show:
    \begin{enumerate}
        \item Every part $p$ of $\Clust(G)$ is also a part of $\Pi_{\Rset,\infty}(G)$.
        \item Every part $p$ of $\Pi_{\Rset,\infty}(G)$ is contained in some part $p'$ of $\Clust(G)$.
    \end{enumerate}

    First, suppose $p$ is a part of $\Clust(G)$. By excisiveness of $\Clust$, $\Clust(G|_p)$ has $p$ as a part, and thus $G|_p \in \Rset$. Now consider the inclusion morphism from $G|_p$ into $G$. This is a morphism from something in $\Rset$ into $G$, and so there must be an edge of $\Phi_\Rset(G)$ whose vertex set is precisely $p$. It now follows from Lemma~\ref{lemma_pi_infty_parts} that $p$ is a part of $\Pi_{\infty}(\Phi_\Rset(G)) = \Pi_{\Rset,\infty}(G)$.

    In the other direction, suppose $p$ is a part of $\Pi_{\Rset,\infty}(G)$. By Lemma~\ref{lemma_pi_infty_parts}, this must simply mean that there is an edge of $\Phi_\Rset(G)$ whose vertex set is $p$. Now, this edge arises from a morphism $\omega$ from some $R \in \Rset$ into $G$. By functoriality of $\Clust$, this morphism will also be a morphism from $\Clust(R)$ into $\Clust(G)$ whose image is $p$. By our construction of $\Rset$, $\Clust(R)$ must have a part containing all of its vertices, and so by definition of what it means to be a morphism, there has to be a part of $\Clust(G)$ containing $p$.
\end{proof}

Finally, in order to show that there is no other implication that holds between these concepts, it suffices to give examples of clustering schemes with every combination of properties not already ruled out.

\begin{example}\label{ex_allthreeprops}
    The connected components functor $\Pi_1$ is representable and excisive -- we can take its representation to be $\Rset = \left\{E_n \given n \in \N\right\}$, where $E_n$ is the graph with $n$ vertices and a single edge containing all vertices.
\end{example}
    
\begin{example}\label{ex_excnotfunctnotrepr}
    For a scheme that is excisive but not functorial (and thus not representable), consider the scheme that partitions all graphs into a single part except $K_2$, which it partitions into two parts.
\end{example}
    
\begin{example}\label{ex_functnotexcnotrepr}
    For a scheme which is functorial but neither excisive nor representable, consider the scheme $\Clust$ that is defined as follows:
    \begin{enumerate}
        \item If $G$ contains any edge containing more than two vertices, cluster all of $G$ as a single part.
        \item Otherwise, if $G$ is simple, for each connected component $p$ of $G$:
        \begin{enumerate}
            \item If $\abs{p} = 1$, cluster $p$ as a single part.
            \item If $\abs{p} = 2$, and $G = K_2$, cluster $p$ as two parts. Otherwise, cluster $p$ as a single part.
            \item If $\abs{p} > 2$, cluster $p$ as a single part.
        \end{enumerate}
    \end{enumerate}
\end{example}
    
\begin{example}\label{ex_noprops}
    Finally, for a scheme which has none of the properties, cluster all graphs as a single part except for $K_2$, which we cluster as two parts, and the disjoint union of two copies of $K_2$, which we cluster as two parts -- that is, each of the copies is its own part. To see that this fails to be functorial, and thus also fails to be representable, consider the graph with two vertices and no edges, which is sent to a single part -- but adding an edge between them splits the graph into two parts. To see that it fails to be excisive, consider the graph that is the disjoint union of two copies of $K_2$ -- it has two parts, each of which is a copy of $K_2$, but these parts will in turn be clustered as two more parts, failing excisiveness.
\end{example}

\begin{figure}
    \centering
    \includegraphics[width=0.3\textwidth]{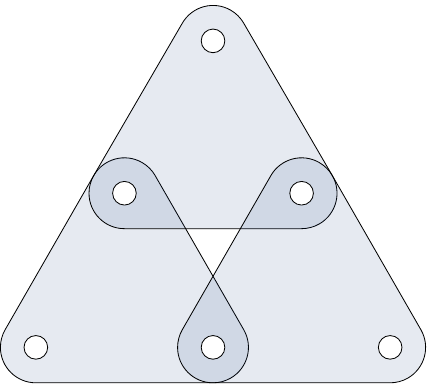}
    \caption{The graph $D$ used in the proof of Lemma~\ref{lemma:exists_exc_and_funct_not_representable}.}
    \label{fig:ex_exc_funct_not_repr-D}
\end{figure}

\begin{lemma}\label{lemma:exists_exc_and_funct_not_representable}
    There exists a clustering scheme which is excisive and functorial, but not equal to any representable clustering scheme. 
\end{lemma}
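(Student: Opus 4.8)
The plan is to take a clustering scheme that \emph{is} representable and perturb it at a single hypergraph, deleting one proper part, in a way that keeps it excisive and functorial but destroys equality with every $\Pi_{\Rset,k}$. I would start from $\Pi_\infty$, which is representable (it equals $\Pi_k\circ\Phi_\Rset$ with $\Rset=\{E_n\mid n\in\N\}$ and $k=\infty$) and whose value on $G$ is exactly the set of vertex sets of edges of $G$ (Lemma~\ref{lemma_pi_infty_parts}). Fix a hypergraph $D$ that is spanned and has at least one proper non-spanning edge -- for definiteness $D$ on $\{1,2,3\}$ with one edge spanning $\{1,2,3\}$ and one edge on $\{1,2\}$ -- and define $\Clust(G)=\Pi_\infty(G)$ for every $G\ne D$, while $\Clust(D)=\{V(D)\}$. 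So $\Clust$ agrees with $\Pi_\infty$ everywhere except that at $D$ it forgets the (spurious) part $\{1,2\}$ of $\Pi_\infty(D)$. One should also arrange that $D$ is not isomorphic to any of the test graphs $G_k$ introduced below; the chosen $D$ has only $3$ vertices, so this is automatic.

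\textbf{Excisiveness and functoriality.} These I would check directly. Excisiveness: a part $q$ of $\Clust(G)$ is a vertex set of an edge, hence a part of $\Pi_\infty(G|_q)$; and $\Clust(G|_q)$ differs from $\Pi_\infty(G|_q)$ only when $G|_q\cong D$, in which case $q=V(D)$, which we have kept. Functoriality: morphisms avoiding $D$ are covered by functoriality of $\Pi_\infty$; a morphism out of $D$ only has the part $V(D)$ to transport, and it goes to a vertex set of an edge of the target; a morphism $f\colon H\to D$ sends a part of $\Pi_\infty(H)$ to a vertex set of an edge of $D$, which is either $V(D)$ (a part of $\Clust(D)$) or $\{1,2\}$, and $\{1,2\}\subseteq V(D)$. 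The point that only \emph{non-spanning} parts may be deleted is exactly that the spanning part $V(D)$ must survive so that morphisms into $D$ still have a part to land in; if the deleted part had a nontrivial $\Aut(D)$-orbit one would delete the whole orbit, but here that orbit is a singleton.

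\textbf{Not representable: the easy cases.} It remains to rule out $\Clust=\Pi_{\Rset,k}$ for all $\Rset$ and all $k$. For $k=1$: $\Clust$ takes overlapping values (e.g. on a simple $P_3$ disjoint from $D$, whose two edges share a vertex), whereas $\Pi_{\Rset,1}$ is always non-overlapping by Remark~\ref{remark:k1_case_nonoverlapping}. For $k=\infty$: if $\Clust=\Pi_{\Rset,\infty}$, then by Lemma~\ref{lemma_pi_infty_parts} a set $p$ is a part of $\Clust(G)$ iff some $R\in\Rset$ admits a morphism onto $G|_p$, i.e. iff $G|_p$ lies in the class $\{H\colon V(H)\in\Clust(H)\}$; since $\Clust$ agrees with $\Pi_\infty$ off $D$ and $V(D)\in\Clust(D)$, that class is exactly the spanned hypergraphs, so the criterion forces $\Clust(G)=\{p\colon G|_p \text{ spanned}\}=\Pi_\infty(G)$ for all $G$ -- contradicting $\Clust(D)\ne\Pi_\infty(D)$.

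\textbf{The main obstacle: finite $k\ge 2$.} Suppose $\Clust=\Pi_{\Rset,k}$ with $2\le k<\infty$. First I would extract from the single-edge hypergraphs that $\Rset$ contains no edgeless graph on fewer than $k$ vertices: such a graph would make $\Phi_\Rset(E_n)$ (for large $n$, where $E_n\ne D$) contain an edge whose vertex set is a proper subset of $V(E_n)$ of size $<k$, hence isolated in $\Lambda_k$ and a spurious part, whereas $\Pi_\infty(E_n)=\{V(E_n)\}$ has only one part. Now test on $G_k$, the hypergraph on $\{1,\dots,k+2\}$ with edges on $\{1,\dots,k+1\}$ and on $\{1,\dots,k,k+2\}$; since $D\not\cong G_k$ we have $\Clust(G_k)=\Pi_\infty(G_k)=\{\{1,\dots,k+1\},\{1,\dots,k,k+2\}\}$. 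Restricting $\Phi_\Rset(G_k)$ to $\{1,\dots,k+1\}$, where $G_k$ looks like $E_{k+1}$ (Claim~\ref{claim:phi_commutes_with_restriction}), together with the no-small-edgeless fact, shows every edge of $\Phi_\Rset(G_k)$ inside $\{1,\dots,k+1\}$ has at least $k$ vertices; since two distinct $k$-subsets of a $(k+1)$-set meet in $k-1<k$ points and so are non-adjacent in $\Lambda_k$, the only way $\{1,\dots,k+1\}$ can be the $\upsilon$-image of a $\Lambda_k$-component of $\Phi_\Rset(G_k)$ is for $\{1,\dots,k+1\}$ itself to be a vertex set of an edge of $\Phi_\Rset(G_k)$; symmetrically for $\{1,\dots,k,k+2\}$. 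But those two vertex sets overlap in exactly $k$ points, hence their line-graph vertices are adjacent in $\Lambda_k(\Phi_\Rset(G_k))$, hence lie in one component whose union is all of $V(G_k)$ -- forcing $V(G_k)$ to be a part of $\Pi_{\Rset,k}(G_k)$, which it is not. This contradiction settles the finite-$k$ case and completes the proof. The finite $k\ge 2$ case is the real work; the other cases and the excisiveness/functoriality verification are bookkeeping.
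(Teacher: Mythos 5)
Your proof is correct, but it takes a genuinely different route from the paper's. The paper builds its counterexample by modifying the line-graph stage of the representable construction: it forms $\Lambda_3(\Phi_{\{D\}}(G))$ for a specific simple graph $D$ and then deletes line-graph edges unless the two copies of $D$ share an actual edge of $G$, and it establishes non-representability by a long case analysis over all pairs $(\Rset,k)$ in an appendix. You instead take the representable scheme $\Pi_\infty$ and delete a single non-spanning (spurious) part at one isomorphism class of hypergraphs. Your verification goes through: the excisiveness and functoriality checks are sound (the key point, which you correctly identify, is that only a non-spanning part may be deleted, so that morphisms into $D$ still have the spanning part to land in), and your three-way split on $k$ works -- $k=1$ by non-overlappingness, $k=\infty$ by the characterization of parts as images of surjections from $\Rset$, and finite $k\ge 2$ by the $E_n$/$G_k$ argument, where the observation that two distinct $k$-subsets of a $(k+1)$-set meet in only $k-1$ points correctly forces $\{1,\dots,k+1\}$ itself to be an edge vertex set of $\Phi_\Rset(G_k)$ and hence forces the two intended parts to merge. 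As for what each approach buys: yours is much shorter and pinpoints exactly the slack left by the main theorem (equality can fail only ``by spurious parts'', and your example realizes precisely that minimal failure), whereas the paper's example is structurally further from any representable scheme and motivates its open problem about generalized line graphs -- a role your example, being isomorphic to $\Pi_\infty$ in the most trivial way, cannot play. One presentational caveat: ``for every $G\ne D$'' should read ``for every $G$ not isomorphic to $D$'', with the deleted part specified isomorphism-invariantly (here, the vertex set of the unique non-spanning edge); your remark about $\Aut(D)$-orbits already anticipates this.
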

\begin{proof}
    To define this clustering scheme, we begin by letting $\Rset$ be the set containing only the graph $D$ as in Figure~\ref{fig:ex_exc_funct_not_repr-D}. Now, for any $G$, consider the graph $\Phi_\Rset(G)$. Each edge of this graph corresponds to a copy of $D$ in $G$, and so for each vertex of $\Lambda_3(\Phi_\Rset(G))$ we can give it three labels, namely the three edges of $G$ that make up the corresponding copy of $D$. So we may define a graph $\Sigma(G)$ by taking $\Lambda_3(\Phi_\Rset(G))$ and keeping an edge $ef$ if and only if $e$ and $f$ share a label.

    Concretely, this means that we will have an edge $ef$ only if they overlap in three vertices which make up an edge of both copies of $D$ to which they correspond.

    Finally, we can define our clustering scheme $\Clust$ by that $\Clust = \Upsilon\circ\Pi\circ\Sigma$, analogously to how we normally define $\Pi_{\Rset,k}$, except here $\Sigma$ plays the role of $\Lambda_3\circ\Phi_\Rset$.
    
    So there are three things we will need to show:
    \begin{enumerate}
        \item $\Clust$ is excisive,
        \item $\Clust$ is functorial, and
        \item $\Clust$ is not equal to any representable clustering scheme.
    \end{enumerate}
    For the first two, the proofs are relatively straightforward adaptations of the proof that a representable clustering scheme is excsive and of functoriality of the line graph functor, respectively, and so we will only give a sketch of the proofs here. For the final point, we end up with a complicated mess of casework, that we defer to the appendix as Lemma~\ref{lemma:excisive_and_functorial_not_representable}.

    \paragraph*{Excisiveness:} Let $G$ be some graph, and $p$ be a part of $\Clust(G)$. Recall that, from Claim~\ref{claim:phi_commutes_with_restriction}, we have that $\Phi_\Rset(G|_p) = \Phi_\Rset(G)|_p$. So let $p''$ be a set of edges of $\Phi_\Rset(G)$ that induce the part $p$. Then, entirely analogously to Claim~\ref{claim:pik_excisive_Lk_subgraph}, $\Sigma(G)|_{p''}$ is a subgraph of $\Sigma(G|_p)$. Clearly, $\Sigma(G)|_{p''}$ is a connected graph, and so $\Sigma(G|_p)$ must have a connected component containing $p''$ (in fact, $p''$ must be one of the connected components of $\Sigma(G|_p)$). This connected component is then sent to $p$ by $\upsilon$ and so $p$ is a part of $\Clust(G|_p)$.

    \paragraph*{Functoriality:} That $\Sigma$ is a functor can be proven similarly to how we prove that $\Lk$ is a functor, only keeping track of a little bit more information about the labels of the edges -- here it helps that we know $\Phi_\Rset$ is a functor. Once we know that $\Sigma$ is a functor, functoriality of $\Clust$ follows by essentially the same proof as functoriality of $\Pi_k$.
\end{proof}

\begin{remark}
    One might ask oneself why we have chosen to require all morphisms to be injective throughout. This is because in the non-overlapping case, one can relatively straightforwardly carry out the same arguments in the non-injective case~\cite{carlsson_mémoli_2013}, and find that there are then in fact only three different representable clustering schemes:
    \begin{enumerate}
        \item The scheme that always sends every vertex to its own part, represented by the empty set or by $K_1$,
        \item the connected components functor, represented by any set of connected graphs containing at least one graph on at least two vertices,
        \item and the scheme that always sends all vertices to the same part, represented by any set of graphs containing a disconnected graph.
    \end{enumerate}

    Something similar seems likely to be true in the overlapping setting, but we choose not to pursue this here, to keep the structure of the paper simpler.
\end{remark}

\section{Computational complexity of representable clustering schemes}

As stated in the introduction, one benefit of this approach to clustering is that it, for simple graphs, yields tractable algorithms for exactly computing the partitioning. In this section, we give a proof of this.

It should be clear that the only step in computing $\Pi_{\Rset,k}(G)$ that might be computationally hard is the step of computing $\Phi_\Rset(G)$, since the other steps are all obviously computable in linear time.

The most naïve possible algorithm for computing $\Phi_\Rset(G)$ given a fixed finite set $\Rset$ of representing graphs and an $n$-vertex graph $G$ is to, for each $\omega \in \Rset$ and each $H \in \binom{V(G)}{\abs{\omega}}$, that is, each $\abs{\omega}$-sized subset $H$ of the vertices of $G$, check whether $G|_H$ contains a subgraph isomorphic to $\omega$. It is easy to see that this will give an algorithm that is polynomial in $n$, but with an exponent given by the maximum size of the vertex set of any graph in $\Rset$.

What is more interesting is that we can get an algorithm that is linear in $n$ and the number of edges in $\Phi_\Rset(G)$ on any class of graphs of bounded expansion, which in turn will get us a better exponent on the $n$ from a bound on the number of edges in $\Phi_\Rset(G)$. Let us first recall what this means, before stating the lemma we will use.

\begin{definition}
    A graph $H$ is a \emph{shallow minor} of depth $t$ of a graph $G$ if it is a subgraph of a graph which can be obtained from $G$ by contracting connected subgraphs of radius at most $t$. So a shallow minor of depth $0$ is just a subgraph, while for $t$ greater than the number of vertices in $G$, a shallow minor of depth $t$ is just a minor.

    A class of simple graphs $\mathcal{C}$ is said to have \emph{bounded expansion} if for every $t \in \N$ there exists a $c_t$ such that, whenever $H$ is a shallow minor of depth $t$ of some graph in $\mathcal{C}$, it holds that
    $$\frac{\abs{E(H)}}{\abs{V(H)}} \leq c(t).$$

    This notion generalizes both proper minor closed classes and classes of bounded degree, and can equivalently be defined in terms of low tree-depth decompositions.\cite{NESETRIL_bounded_expansion}
\end{definition}

Since bounded expansion is a sparseness property, it makes sense to expect that it cannot contain too many copies of any given graph, and so in particular $\Phi_\Rset(G)$ will not have too many edges.

\begin{lemma}\label{lem:bound_edges_of_phirsetG}
    For any finite set $\Rset$ of simple graphs, let $\alpha(\Rset) = \max_{H \in \Rset} \alpha(H)$ be the maximum independence number of any graph in $\Rset$. Then, for any graph class $\mathcal{C}$, there exists a constant $C_{\Rset,\mathcal{C}}$ such that for any graph $G \in \mathcal{C}$, $\abs{E\left(\Phi_\Rset(G)\right)} = O\left(\abs{V(G)}^{\alpha(\Rset)}\right)$.
\end{lemma}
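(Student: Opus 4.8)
The plan is to reduce the statement to a single combinatorial fact about sparse classes: a graph $G$ drawn from a class $\mathcal{C}$ of bounded expansion contains at most $O(\abs{V(G)}^{\alpha(H)})$ subgraphs isomorphic to any fixed simple graph $H$. Granting this, the lemma follows quickly. First I would record that a morphism $R \to G$ in $\G$ between simple graphs is exactly an injective vertex map carrying edges to edges, so it is determined by its image subgraph together with an isomorphism onto that image; hence $\abs{\hom(R,G)} = \abs{\Aut(R)}\cdot s_R(G)$, where $s_R(G)$ is the number of subgraphs of $G$ isomorphic to $R$. Since $\abs{\Aut(R)} \leq \abs{V(R)}!$ is a constant, $\abs{\hom(R,G)} = O(s_R(G))$. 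Combining this with $\abs{E(\Phi_\Rset(G))} \leq \sum_{R \in \Rset}\abs{\hom(R,G)}$ and finiteness of $\Rset$, the subgraph bound gives $\abs{E(\Phi_\Rset(G))} \leq \abs{\Rset}\cdot O(\abs{V(G)}^{\alpha(\Rset)}) = O(\abs{V(G)}^{\alpha(\Rset)})$, with the constant $C_{\Rset,\mathcal{C}}$ absorbing $\abs{\Rset}$, the automorphism counts, and the constants coming from the expansion bounds of $\mathcal{C}$.

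So the real content is the subgraph-counting bound, which I would establish via low tree-depth colourings (one of the equivalent characterisations of bounded expansion recalled above): for each $p$ there is an $N = N(p)$ so that every $G \in \mathcal{C}$ admits a vertex colouring with $N$ colours in which any $j \leq p$ colour classes together induce a subgraph of tree-depth at most $j$. Fix $R \in \Rset$ and take $p = \abs{V(R)}$. Every copy of $R$ in $G$ uses at most $p$ colours, so $s_R(G) \leq \sum_{J} s_R(G_J)$, the sum ranging over the $O_N(1)$ colour-subsets $J$ of size at most $p$, where $G_J$ is the subgraph induced by the classes in $J$ and has tree-depth at most $p$. It therefore suffices to bound the number of copies of $R$ in a graph of bounded tree-depth.

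For that step I would fix an elimination forest $Y$ of height at most $p$ witnessing the tree-depth bound, so every edge joins an ancestor--descendant pair. Given a copy of $R$ with image $U$, call $u \in U$ \emph{minimal} if $U$ contains no strict descendant of $u$; two comparable vertices of $U$ adjacent in the copy would contradict minimality, so the preimages of the minimal vertices form an independent set of $R$, hence there are at most $\alpha(R)$ of them. Moreover every vertex of $U$ is a weak ancestor of some minimal vertex (descend within $U$ until stuck), so $U$ is contained in the union of the ancestor-sets of the minimal vertices, a set of at most $\alpha(R)\cdot p$ vertices that is determined once the images of the minimal vertices are chosen. Thus a copy of $R$ is specified by choosing which (at most $\alpha(R)$) vertices of $R$ are minimal, choosing their images ($\leq \abs{V(G)}^{\alpha(R)}$ ways), and choosing the remaining injection inside a set of size $O_{p,R}(1)$, giving $O_R(\abs{V(G)}^{\alpha(R)})$ copies. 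Summing over colour-subsets and then over $R \in \Rset$ finishes the argument.

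The main obstacle is really just assembling these pieces; the one genuinely delicate point is the bounded-tree-depth count, and in particular the observation that the minimal vertices of the image correspond to an independent set of $R$ while every image vertex lies above one of them --- this is precisely what converts the height bound $p$ into a multiplicative constant and leaves only the $\alpha(R)$ "free" coordinates. Everything else (the reduction through $\hom$ and $\Aut$, the union bound over the finite $\Rset$, and the low tree-depth colouring) is routine; indeed the subgraph-counting bound is available in the literature on classes of bounded expansion, so a shorter alternative is simply to cite it and carry out only the first paragraph above.
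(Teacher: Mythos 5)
Your proposal is correct, but it proves the key counting bound by a different route than the paper. The paper reduces immediately to a single $H \in \Rset$, observes that bounded expansion implies bounded degeneracy $c_0$, and then simply cites the known estimate that a $c_0$-degenerate graph on $n$ vertices contains at most $\sum_{t=1}^{\alpha(H)} \mathrm{Acyc}_t(H)\, c_0^{\abs{H}-t}\, n^{t}$ copies of $H$, where $\mathrm{Acyc}_t(H)$ counts acyclic orientations of $H$ with $t$ sinks; the exponent $\alpha(H)$ falls out because an acyclic orientation cannot have more sinks than the independence number. You instead go through low tree-depth colourings and an elimination-forest argument, with the key observation that the vertices of a copy of $R$ that are minimal in the forest order form an independent set of $R$ and that every other image vertex lies on the ancestor-path of one of them. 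Both arguments are standard in the sparsity literature and yield the same exponent; your version is self-contained where the paper's is a one-line citation, and you are more careful than the paper about the distinction between counting morphisms (edges of $\Phi_\Rset(G)$) and counting subgraph isomorphs, which differ by the factor $\abs{\Aut(R)}$ that you correctly absorb into the constant. The one trade-off worth noting is that the paper's degeneracy-based route needs only a bound on the depth-$0$ expansion constant $c_0$, whereas your colouring-based route invokes the full bounded-expansion hypothesis (via the equivalence with low tree-depth decompositions); since the lemma is only ever applied to classes of bounded expansion, this makes no difference here.
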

\begin{proof}
    Clearly it suffices to show this for the case $\Rset = \{H\}$ for a fixed graph $H$. Now, any graph class of bounded expansion has bounded degeneracy -- in particular, $c_0$ bounds the degeneracy.

    Now, it is known from for example~\cite[Exercise 3.2, p.~59]{sparsity_book} that if $G$ is $c_0$-degenerate and $H$ is any graph, the number of copies of $H$ in $G$ is at most
    $$\sum_{t=1}^{\alpha(H)} \mathrm{Acyc}_t(H) c_0^{\abs{H}-t} \abs{V(G)}^t,$$
    where $\mathrm{Acyc}_t(H)$ is the number of acyclic orientations of $H$ with $t$ sinks, which clearly gives us the result.
\end{proof}

To get our algorithm, we will use the following result from~\cite[Corollary 18.2, p.~407]{sparsity_book}:

\begin{lemma}\label{lemma_subgraph_counting_bounded_exp}
    Let $\mathcal{C}$ be a class with bounded expansion and let $H$ be a fixed graph. Then there exists a linear time algorithm which computes, from a pair $(G,S)$ formed by a graph $G \in \mathcal{C}$ and a subset $S$ of vertices of $G$, the number of isomorphs of $H$ in $G$ that include some vertex in $S$. There also exists an algorithm running in time $O(n) + O(k)$ listing all such isomorphs, where $k$ denotes the number of isomorphs.
\end{lemma}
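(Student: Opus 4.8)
The plan is to reduce the statement to the standard machinery of low tree-depth colourings for classes of bounded expansion. Recall that if $\mathcal{C}$ has bounded expansion then for every $p \in \N$ there is a constant $N = N(p,\mathcal{C})$ together with a linear-time algorithm that, on input $G \in \mathcal{C}$, produces a colouring $c : V(G) \to [N]$ such that the subgraph induced by the union of any $i \le p$ colour classes has tree-depth at most $i$ (this is exactly the ``low tree-depth decomposition'' characterisation alluded to just before the lemma). First I would invoke this with $p = \abs{V(H)}$, obtaining such a colouring $c$ in time $O(\abs{V(G)})$.

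Next, observe that any isomorph of $H$ in $G$ is contained in the subgraph induced by at most $\abs{V(H)}$ of the colour classes, and that subgraph has tree-depth at most $\abs{V(H)}$, hence \emph{bounded} tree-depth. So it suffices to solve the following subproblem in linear (resp. $O(\abs{V(B)}) + O(k)$) time: given a graph $B$ of tree-depth at most a constant $d$, a subset $S \subseteq V(B)$, and the fixed pattern $H$, count (resp. list) the isomorphs of $H$ in $B$ meeting $S$. I would handle this by dynamic programming along an elimination forest $F$ of $B$ of height at most $d$, computable in linear time: since every edge of $B$ joins an ancestor--descendant pair of $F$, an embedding of $H$ into $B$ can be assembled by processing $F$ from the leaves upward, maintaining at each node the (constantly many, since both $d$ and $\abs{V(H)}$ are fixed) partial embeddings of subsets of $V(H)$ into the current root-to-node path, annotated with a bit recording whether $S$ has already been hit. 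Because the number of DP states per node depends only on $d$ and $\abs{V(H)}$, the DP runs in time linear in $\abs{V(B)}$; for the listing version one traces back through the DP table, spending $O(1)$ per emitted isomorph.

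To obtain the global count I would run this subroutine on $B_I = G[c^{-1}(I)]$ for each of the $O\!\left(\binom{N}{\le \abs{V(H)}}\right) = O(1)$ colour sets $I$ with $\abs{I} \le \abs{V(H)}$, and combine the outputs by inclusion--exclusion over the subset lattice so that each isomorph of $H$, which spans some exact set of colours, is counted exactly once; the ``hits $S$'' constraint is threaded through the subroutine (equivalently, one could subtract the count obtained on $G - S$). Since we call a linear-time routine a constant number of times and the colouring step is itself linear, the total running time is $O(\abs{V(G)})$, and $O(n) + O(k)$ for the listing variant by the same accounting. The main obstacle is the bounded-tree-depth dynamic programme: one must set it up so that every isomorph is counted once, so that the $S$-hitting flag is propagated correctly, and --- for the listing version --- so that enumeration is duplication-free within the promised $O(n) + O(k)$ bound; the rest is bookkeeping over a constant-sized family of colour classes.
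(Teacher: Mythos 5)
The paper does not prove this lemma at all --- it is quoted directly from \cite[Corollary 18.2, p.~407]{sparsity_book} and used as a black box --- so there is no in-paper argument to compare against; your sketch is essentially the standard proof of that cited result, namely a linear-time low tree-depth colouring with parameter $p=\abs{V(H)}$, a bounded-height elimination-forest dynamic programme on each union of at most $\abs{V(H)}$ colour classes, and inclusion--exclusion (equivalently, counting only isomorphs using all colours of the chosen set) to avoid double counting, with the $S$-constraint carried as a flag. The one point I would tighten is your DP state: a copy of $H$ need not lie on a single root-to-node path, so the state must record, for each subset $X\subseteq V(H)$ already embedded into the processed portion of the forest, the placement of only those vertices of $X$ that sit on the current root-to-node path, the remainder having been completed inside earlier subtrees whose interface with the rest of the graph is exactly that path. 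Since the path has length at most the (constant) tree-depth and $\abs{V(H)}$ is fixed, this is still a constant number of states per node, so your claimed $O(n)$ counting time and $O(n)+O(k)$ listing time go through.
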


Just applying this lemma for each $\omega \in \Rset$ with $S = V(G)$ immediately gets us that we can, for fixed $\Rset$, compute $\Phi_\Rset(G)$ in time linear in $n$ and the number of edges of $\Phi_\Rset(G)$, which together with Lemma~\ref{lem:bound_edges_of_phirsetG} gets us a bound on the complexity in terms of $\alpha(\Rset)$.

\begin{theorem}
    For any finite set $\Rset$ of simple graphs and any class $\mathcal{C}$ of bounded expansion, there exists an algorithm that computes $\Phi_\Rset(G)$ for any graph $G \in \mathcal{C}$, which runs in time $O\left(\abs{V(G)} + \abs{E(\Phi_\Rset(G))}\right) = O\left(\abs{V(G)}^{\alpha(\Rset)}\right)$.
\end{theorem}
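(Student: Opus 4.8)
The statement is a corollary of Lemmas~\ref{lem:bound_edges_of_phirsetG} and~\ref{lemma_subgraph_counting_bounded_exp}, so the plan is only to assemble the two and do the bookkeeping. By definition $E(\Phi_\Rset(G)) = \bigcup_{\omega \in \Rset} \hom(\omega, G)$, with these sets essentially disjoint since morphisms with different domains are different objects, so it suffices to produce $\hom(\omega, G)$ for one fixed $\omega \in \Rset$ at a time and concatenate; the running times and edge counts then add, and $\abs{\Rset}$, as well as each $\abs{\Aut(\omega)}$ and each $\abs{V(\omega)}$, is a constant. We may also assume $\Rset$ contains a graph with at least one vertex, so that $\alpha(\Rset) \geq 1$; in the remaining degenerate case $\Phi_\Rset(G)$ is trivial and the claim is immediate.

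First I would spell out the algorithm for a single $\omega$. Run the listing routine of Lemma~\ref{lemma_subgraph_counting_bounded_exp} on the pair $(G, V(G))$: since $S = V(G)$, every subgraph of $G$ isomorphic to $\omega$ meets $S$, so this enumerates all of them in time $O(\abs{V(G)}) + O(k_\omega)$, where $k_\omega$ is their number. For each listed copy $H \subseteq G$, run through the $\abs{\Aut(\omega)}$ isomorphisms $\omega \to H$; composing each with the inclusion $H \hookrightarrow G$ produces a morphism $\omega \to G$, and conversely every element of $\hom(\omega, G)$ arises exactly once in this way. Emitting each such morphism together with its image gives all the edges of $\Phi_\Rset(G)$ coming from $\omega$, with the correct edge-assigning function, at cost $O(1)$ per morphism since $\omega$ has bounded size. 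The one subtlety is that a morphism $\omega \to G$ only sends edges of $\omega$ to edges of $G$, so it witnesses a not-necessarily-induced copy; if Lemma~\ref{lemma_subgraph_counting_bounded_exp} is phrased for induced copies, I would first reduce by the standard inclusion--exclusion over the finitely many supergraphs of $\omega$ on the same vertex set, which inflates the number of invocations only by a constant.

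For the running time, summing over $\omega \in \Rset$ gives $O(\abs{V(G)}) + O\!\left(\sum_{\omega} \abs{\Aut(\omega)}\, k_\omega\right)$, and $\sum_{\omega} \abs{\Aut(\omega)}\, k_\omega$ is $\abs{E(\Phi_\Rset(G))}$ up to a constant factor (exactly, if no induced/non-induced reduction was needed; otherwise one uses that each induced copy of a supergraph of $\omega$ accounts for at least one and at most boundedly many copies of $\omega$ on the same vertices). Hence the total is $O\!\left(\abs{V(G)} + \abs{E(\Phi_\Rset(G))}\right)$. Finally, Lemma~\ref{lem:bound_edges_of_phirsetG} bounds $\abs{E(\Phi_\Rset(G))}$ by $O\!\left(\abs{V(G)}^{\alpha(\Rset)}\right)$ for $G \in \mathcal{C}$, and since $\alpha(\Rset) \geq 1$ we get $O\!\left(\abs{V(G)} + \abs{V(G)}^{\alpha(\Rset)}\right) = O\!\left(\abs{V(G)}^{\alpha(\Rset)}\right)$, as claimed.

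There is no genuine obstacle: all the hard content -- the linear-time counting and listing of subgraphs in bounded-expansion classes, and the degeneracy-based bound on the number of copies -- is imported wholesale from~\cite{sparsity_book} as Lemmas~\ref{lemma_subgraph_counting_bounded_exp} and~\ref{lem:bound_edges_of_phirsetG}. The only place that wants care is the bookkeeping just described: identifying the edges of $\Phi_\Rset(G)$, which are \emph{formally} monomorphisms $\omega \to G$, with the subgraph isomorphs the imported algorithm returns -- a constant automorphism factor plus the induced-versus-non-induced reduction -- and confirming that writing down the output hypergraph costs only $O(1)$ per edge, which holds because every $\omega \in \Rset$ has bounded size.
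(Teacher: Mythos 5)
Your proof is correct and follows the same route as the paper, which simply applies Lemma~\ref{lemma_subgraph_counting_bounded_exp} once per $\omega \in \Rset$ with $S = V(G)$ and then invokes Lemma~\ref{lem:bound_edges_of_phirsetG} for the exponent. The extra bookkeeping you supply (the $\abs{\Aut(\omega)}$ factor relating isomorphs to morphisms, and the induced-versus-non-induced reduction) is detail the paper leaves implicit, and it is handled correctly.
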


\section{Open problems}
We have shown that all functorial and excisive clustering schemes are refined by a representable clustering scheme, but as Lemma~\ref{lemma:exists_exc_and_funct_not_representable} shows, this cannot be strengthened to equality. However, the construction in our example is still very similar to the construction of a representable clustering scheme, only with a more sophisticated rule for what edges we add to the line graph. It is therefore natural to ask whether there is a more general notion of a line graph, which would get us a more precise classification of clustering schemes.

We also discussed, in our proof of the existence of representable but not finitely representable clustering schemes, that the general question of what graph classes are finitely representable is interesting, and not immediately obvious.

\section*{Acknowledgments}

We thank Professors Tatyana Turova and Svante Janson and an anonymous reviewer for their helpful comments on a previous version of this paper.

\printbibliography

\appendix

\section{Definitions of categorical language}\label{cat_theory_appendix}

For the reader of a more combinatorial bent, who might not recall all the definitions of category theory used in this paper, we include a short appendix stating them.

\begin{definition}
    A \emph{category} $\mathcal{C}$ consists of a class of \emph{objects} $\mathrm{ob}(\mathcal{C})$, and for each pair of objects $a, b \in \mathcal{C}$ a (possibly empty) class $\Hom(a,b)$ of \emph{morphisms} from $a$ to $b$. 
    
    We require these morphisms to compose, in the sense that for any $f \in \Hom(a,b)$ and $g \in \Hom(b,c)$ there exists an $fg \in \Hom(a,c)$, and this composition must be associative. There must also, for each $a \in \mathcal{C}$, be an identity morphism $\mathrm{id}_a \in \Hom(a,a)$, with the property that $f\mathrm{id}_a = f$ and $\mathrm{id}_a g = g$ whenever these compositions make sense.
\end{definition}

It is worth remarking that the use of the word \emph{class} instead of \emph{set} is not accidental -- the collection of all finite simple graphs is of course too big to be a set. However, since this is only so for ``dumb reasons'' -- that there are class-many different labelings of the same graph -- and the collection of isomorphism classes of finite simple graphs \emph{is} a set, this distinction will never actually affect us, and we will elide it throughout the text.

\begin{definition}
    A \emph{functor} $F$ from a category $\mathcal{A}$ to a category $\mathcal{B}$ is a mapping that associates to each $a \in \mathcal{A}$ an $F(a) \in \mathcal{B}$, and that for each pair of objects $a, a' \in \mathcal{A}$ associates each morphism $f \in \Hom(a, a')$ to a morphism $F(f) \in \Hom(F(a), F(a'))$. We require this mapping to respect the identity morphisms, in the sense that $F(\mathrm{id}_a) = \mathrm{id}_{F(a)}$, and composition of morphisms, so that $F(fg) = F(f)F(g)$.

    A functor from a category to itself is called an \emph{endofunctor}.
\end{definition}

The categories we study in this paper are all concrete, that is, their objects are sets with some extra structure on them, and the morphisms are just functions between the sets that respect the structure in an appropriate sense. Our functors never do anything strange -- they all leave the underlying set unchanged, only changing what structure we have on the set, and they do ``nothing'' on the morphisms, that is, as set functions they are just the same function again between the same sets.

In this setting most of the requirements of a functor are trivial -- of course it will behave correctly with identity morphisms and composition, because in a sense it isn't doing anything to the morphisms. Therefore, the one thing we need to check when proving that things are functors will be that morphisms do map to morphisms, that is, that a function that respects the structure on the sets in the first category will also respect the structure we have in the second category.

The final categorical concept we will need is that of two \emph{functors} being isomorphic, since our classification result ends up giving us not equality but just isomorphism of clustering schemes.

\begin{definition}
    Suppose $\mathcal{A}$ and $\mathcal{B}$ are two categories, and $F$ and $G$ are two functors from $\mathcal{A}$ to $\mathcal{B}$. A \emph{natural transformation} $\eta$ from $F$ to $G$ is a mapping that associates to each object $a \in \mathcal{A}$ an element $\eta_a \in \Hom(F(a), G(a))$, such that for every object $b \in \mathcal{A}$ and each morphism $f \in \Hom(a, b)$, the following diagram commutes:
    $$\begin{tikzcd}
        F(a) \arrow[r, "F(f)"] \arrow[d, "\eta_a"'] & F(b) \arrow[d, "\eta_b"]\\
        G(a) \arrow[r, "G(f)"'] & G(b)
    \end{tikzcd}$$
    In other words, the following equation must hold for all $f \in \Hom(a,b)$:
    $$\eta_b \circ F(f) = G(f) \circ \eta_a.$$

    Two functors $F$ and $G$ are \emph{isomorphc} if there exists a natural transformation $\eta$ from $F$ to $G$ such that $\eta_a$ is an isomorphism for all $a \in \mathcal{A}$.
\end{definition}

This notion is very general -- in our actual result, both $\mathcal{A}$ and $\mathcal{B}$ will be concrete categories, and for each $a$, $a$, $F(a)$ and $G(a)$ will have the same underlying set, and $f$, $F(f)$, and $G(f)$ will be the same as functions on the underlying sets. So here we will be able to take each $\eta_a$ to be the identity function on the underlying set. So all the notation of the definition ends up referring to very simple things, in the end.

\section{Proof of Lemma~\ref{lemma:excisive_and_functorial_not_representable}}\label{excisive_and_functorial_not_representable_appendix}

\begin{lemma}\label{lemma:excisive_and_functorial_not_representable}
    The clustering scheme defined in Lemma~\ref{lemma:exists_exc_and_funct_not_representable} is not equal to any representable clustering scheme.
\end{lemma}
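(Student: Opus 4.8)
The goal is to show that the clustering scheme $\Clust = \Upsilon \circ \Pi \circ \Sigma$, built from the single graph $D$ of Figure~\ref{fig:ex_exc_funct_not_repr-D}, is not equal to $\Pi_{\Gset,k}$ for any set $\Gset$ of graphs and any $k \geq 1$. The strategy is to identify a small ``witness'' family of graphs on which the behaviour of $\Clust$ is incompatible with that of \emph{any} representable scheme, and then to argue by a combinatorial case analysis over $k$ and over the possible graphs in $\Gset$. The key structural feature to exploit is that $\Sigma$ keeps only those adjacencies in $\Lambda_3(\Phi_\Rset(G))$ where two copies of $D$ share a \emph{whole edge of $D$} (three vertices forming an edge in both copies), whereas any $\Pi_{\Gset,k}$ decides adjacency purely by the size of a vertex-set overlap. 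So I would look for a graph $G$ containing two copies of $D$ that overlap in three vertices which do \emph{not} form a common $D$-edge: then $\Clust$ keeps them in separate parts, but any $\Pi_{\Gset,k}$ with $k \leq 3$ that detects both copies of $D$ is forced to merge them, and any scheme with $k \geq 4$ (or one whose representing graphs fail to detect $D$ at all) will get the parts of a single copy of $D$ wrong.

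Concretely, I would first pin down $\Clust(D)$ itself and $\Clust$ on one or two slightly larger graphs — e.g. the graph consisting of two copies of $D$ glued along a three-vertex set that is an edge of $D$ in one copy but not the other, and the ``opposite'' gluing where it is a common edge. This gives a pair of graphs $G_1, G_2$ such that $\Clust(G_1)$ has two parts while $\Clust(G_2)$ has one part, even though $G_1$ and $G_2$ have the same ``overlap profile'' in the crude sense a line-graph-of-hypergraph construction can see. Second, suppose for contradiction $\Clust = \Pi_{\Gset,k}$. Applying $\Clust = \Pi_{\Gset,k}$ to $D$ and using Lemma~\ref{lemma_representation_hull} / Lemma~\ref{lemma_connected_representation_hull} (the representation-hull machinery: $\Pi_{\Gset,k}(D)$ having $V(D)$ as a part forces $\Phi_\Gset(D)$ to be $k$-ly connected, hence to actually contain copies of graphs detecting the structure of $D$), I would extract a graph $R \in \Gset$ together with a morphism $R \to D$ whose image is large enough to ``see'' the edges of $D$. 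Third, transport this copy of $R$ along the two inclusions $D \hookrightarrow G_1$ and $D \hookrightarrow G_2$; because $\Phi_\Gset$ and $\Pi_k$ only care about vertex-set intersections, the resulting edges of $\Phi_\Gset(G_i)$ interact identically in $\Lambda_k$ for $i = 1, 2$, contradicting $\Clust(G_1) \neq \Clust(G_2)$.

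The main obstacle — and the reason this is deferred to an appendix as ``a complicated mess of casework'' — is that $\Gset$ is allowed to be an arbitrary, possibly infinite set of arbitrary hypergraphs, and $k$ is an arbitrary positive integer, so the contradiction has to be derived uniformly. One must separately handle: (i) small $k$ (here $k \le 3$ is the interesting regime, since the overlaps in play have size $3$), where the issue is spurious merging; (ii) large $k$, where $\Phi_\Gset(G)$ simply cannot be $k$-ly connected on the relevant small graphs, so $\Clust(D)$ comes out wrong; and (iii) the possibility that some $R \in \Gset$ is itself a large or non-simple graph that maps into $D$ in an unexpected way, which has to be ruled out by counting vertices and edges of $D$ and invoking injectivity of morphisms. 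I would organise the casework by first proving a clean lemma of the form ``if $\Pi_{\Gset,k} = \Clust$ then $k \in \{2,3\}$ and every $R \in \Gset$ that maps into $D$ does so with image contained in a single $D$-edge's worth of structure or all of $D$,'' and only then derive the final contradiction from the $G_1$ versus $G_2$ comparison; this keeps the number of live cases small at the point where the geometric argument is made.
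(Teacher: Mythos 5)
Your opening moves match the paper's: you correctly identify that $\Sigma$ distinguishes ``overlap in three vertices forming a common edge of $D$'' from ``overlap in three arbitrary vertices,'' and your witness graph with two copies of $D$ glued along three vertices that are not a common edge is essentially the paper's Figure~\ref{fig:appendix_graph_G}, which disposes of $k\leq 3$ exactly as you describe (after first forcing $D\in\Gset$ and excluding proper subgraphs of $D$ from $\Gset$, which you implicitly assume but should prove).

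The genuine gap is in your case (ii), the regime $k\geq 4$. You claim that for large $k$ ``$\Phi_\Gset(G)$ simply cannot be $k$-ly connected on the relevant small graphs, so $\Clust(D)$ comes out wrong.'' This is false: since $D\in\Gset$ is forced, $\Phi_\Gset(D)$ contains the spanning edge given by the identity morphism, and a single spanning edge makes a graph $k$-ly connected for \emph{every} $k$ including $\infty$ (its vertex in $\Lambda_k$ is a connected component whose image under $\upsilon$ is all of $V(D)$). So $\Pi_{\Gset,k}(D)=\Clust(D)$ for all $k$, and no contradiction can be extracted from $D$ alone; your proposed ``clean lemma'' forcing $k\in\{2,3\}$ also has the logic backwards, since $k\leq 3$ is precisely the regime that \emph{is} excluded. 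The paper's actual work for $3<k<\infty$ is an inductive construction you do not have: it defines a family $F_i$ of chains of $i+1$ copies of $D$, shows by induction that $k>3i$ forces $F_i\in\Gset$ (because otherwise the two large overlapping sub-copies of $F_{i-1}$ inside $F_i$ cannot be joined in $\Lambda_k$, yet $\Clust(F_i)$ is a single part), and then builds a graph $G_i$ containing two copies of $F_i$ that overlap in at least $k$ vertices but share no hyperedge, so that $\Pi_{\Gset,k}$ merges them while $\Clust$ does not. The case $k=\infty$ also needs separate treatment (there $\Gset$ is forced to be the full set of graphs that $\Clust$ sends to a spanned partition, and $\Pi_{\Gset,\infty}(F_1)$ acquires spurious parts for the two copies of $D$ that $\Clust(F_1)$ lacks). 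Without the $F_i$ induction or some substitute for it, your argument does not close for any $k>3$.
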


\begin{proof}
    Suppose for contradiction that $\Clust = \Pi_{\Rset,k}$ for some $\Rset$ and $k$, and assume we take $\Rset$ of minimal cardinality. We need to exclude every possible combination of $\Rset$ and value of $k$ -- so we end up with a lot of casework.
    
    \paragraph*{$\Rset$ cannot contain any proper subgraph of $D$, and must contain $D$ itself:}

    It is clear that $\Rset$ cannot contain any graph $G$ which does not have $D$ as a subgraph, since then $\Pi_{\Rset,k}(G)$ would have a part containing all vertices, while $\Clust(G)$ would have no parts at all.

    In particular this means $\Rset$ cannot contain any proper subgraph of $D$, and so, in order for $\Pi_{\Rset,k}(D)$ to have a part containing all vertices, we must have $D \in \Rset$.

    \paragraph*{Excluding the case $k \leq 3$:}

    \begin{figure}[p]
        \centering
        \includegraphics[width=0.6\textwidth]{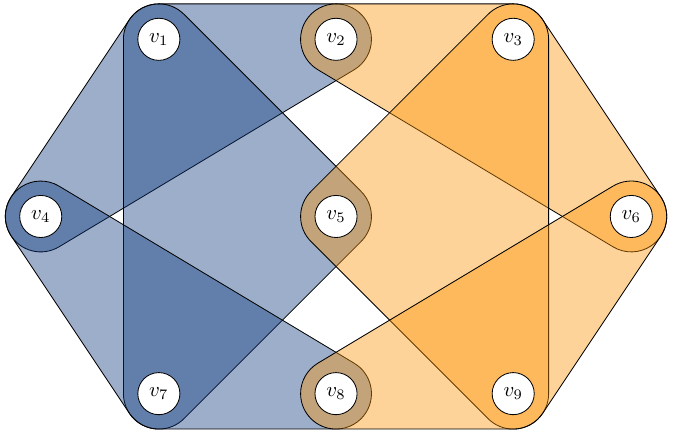}
        \caption{The graph $G$ used in the proof of Lemma~\ref{lemma:excisive_and_functorial_not_representable}.}
        \label{fig:appendix_graph_G}
    \end{figure}

    In this case, consider the graph $G$ as in Figure~\ref{fig:appendix_graph_G}. An attentive eye will see that this graph is two copies of $D$, one on the blue edges and one on the orange edges, and they overlap in their corners. It is clear that $\Clust(G)$ will have two parts, since the overlap of the two copies of $D$ is not an edge of either copy. However, since $D \in \Rset$ and $k \leq 3$, we must have a part containing all vertices in $\Pi_{\Rset,k}(G)$, which is a contradiction.

    \paragraph*{Establishing graphs that have to be in $\Rset$ if $k$ is big:}

    \begin{figure}[p]
        \centering
        \includegraphics[width=\textwidth]{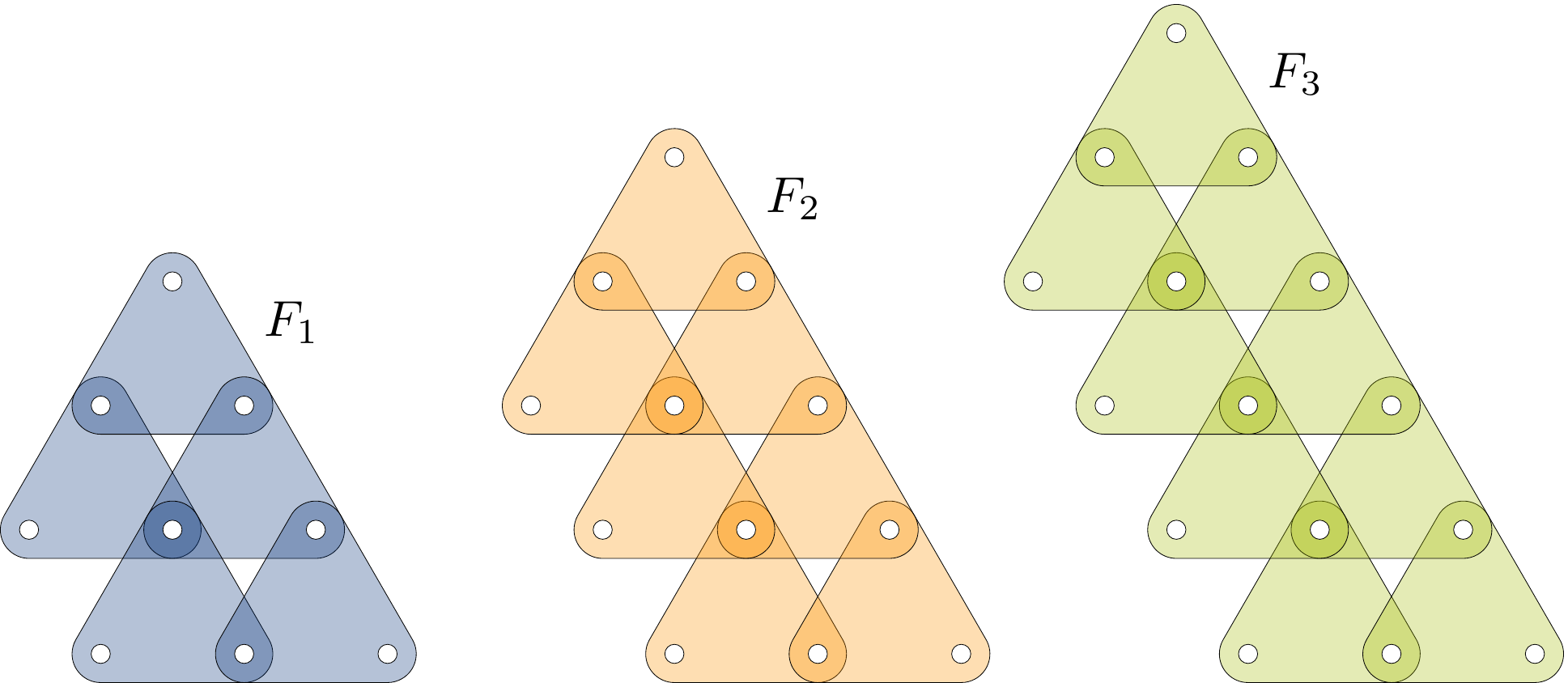}
        \caption{The graph family $F_i$ used in the proof of Lemma~\ref{lemma:excisive_and_functorial_not_representable}, illustrated by its first three members.}
        \label{fig:appendix_graphs_Fi}
    \end{figure}

    Now consider the family $F_i$ of graphs as in Figure~\ref{fig:appendix_graphs_Fi}. We claim that if $k > 3i$, we must have $F_i \in \Rset$. We proceed by induction -- so, for the base case, suppose that $k > 3$ and consider the graph $F_1$. It is easy to see that $\Clust(F_1)$ has a single part containing all vertices. Now, $\Phi_\Rset(F_1)$ contains at least two edges corresponding to the two copies of $D$ in $F_1$, but these overlap in only three vertices, and so there will not be an edge between them in $\Lk(\Phi_\Rset(F_1))$. So there must be some additional graph $G$ in $\Rset$ that is a subgraph of $F_1$, and overlaps both copies of $D$ in more than three vertices.

    If this $G$ is a proper subgraph of $F_1$, it is easy to see that it will not be sent to a single part by $\Clust$, which would be a contradiction to the fact that $\Pi_{\Rset,k}(G)$ would send this $G$ to a single part. So $G$ must be $F_1$ itself, and so $\Rset$ must contain $F_1$.

    For the inductive step, let us fix an $i$ and suppose that $k > 3i$. Then, by our induction hypothesis, $F_{i-1} \in \Rset$. Again, we have that $\Clust(F_i)$ has a single part containing all vertices. In $\Phi_\Rset(F_i)$ we will have two edges corresponding to the copy of $F_{i-1}$ in $F_i$ created by the first $i$ copies of $D$ and last $i$ copies of $D$ respectively, and these overlap in a copy of $F_{i-2}$, which has $3i < k$ vertices. So there is no edge connecting these two edges in $\Lk(\Phi_\Rset(F_i))$, and so there must be some additional graph $G$ in $\Rset$ that is a subgraph of $F_i$ and overlaps both copies of $D$ in more than $3i$ vertices. The same argument again gives that, if this subgraph were proper, it would be a copy of $F_{i-1}$ with a single edge hanging off of it, which would not be sent to a single part by $\Clust$, and so it must be $F_i$ itself.

    \paragraph*{Excluding the case $3 < k < \infty$:}

    So, suppose $3 < k < \infty$, and let $i$ be such that $3i < k \leq 3(i+1)$. From the preceding section, we know that $F_i \in \Rset$. Our strategy will be to find a graph $G_i$ that has two copies of $F_i$ in it which overlap in all but three vertices, but share no edge. Now, since $F_i$ has $3(i+2)$ vertices, and so the two copies of $F_i$ would be overlapping in $3(i+1) \geq k$ vertices, we would have that $\Pi_{\Rset,k}(G_i)$ would have a part containing all vertices, while $\Clust(G_i)$ would not, since the two copies of $F_i$ do not overlap in any full edge.

    \begin{figure}
        \centering
        \includegraphics[width=0.7\textwidth]{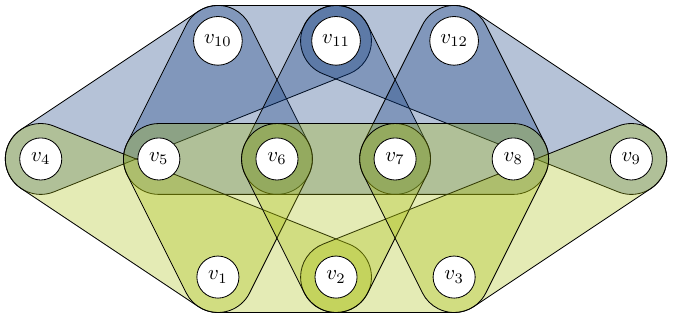}
        \caption{The graph $G_1$ used in the proof of Lemma~\ref{lemma:excisive_and_functorial_not_representable}. This graph contains two copies of $F_1$, corresponding to the green and the blue edges. Here, the vertices $v_4$ and $v_9$ are the leftmost column in the standard drawing of $F_1$ (Figure~\ref{fig:appendix_graphs_Fi}), and the vertices $v_5,v_6,v_7$ and $v_8$ are the rightmost column. The vertices $v_1,v_2,v_3$ are the middle column in the green copy of $F_1$, and the vertices $v_{10},v_{11},v_{12}$ are the middle column in the blue copy of $F_1$.}
        \label{fig:appendix_graph_G1}
    \end{figure}
    \clearpage

    The construction we use here gets too complicated to draw in any case beyond $i = 1$ (that case is illustrated in Figure~\ref{fig:appendix_graph_G1}), but the idea is the following: The graph $F_i$ has three ``columns'' of vertices, and we can take our two copies of $F_i$ to have the same left- and rightmost columns. For the middle column, we pick three of the vertices in each copy to be distinct, and for the rest, we permute their labels and then identify vertices with the same label. As long as we pick a permutation $\sigma$ such that $\abs{\sigma(j)-j}>1$ for all $i$, we will have that the two copies of $F_i$ do not share any edges.

    That such a permutation $\sigma$ exists for $i > 3$ is easy to see -- in that case, we can even pick such a permutation of the entire middle column first. In the case $i = 1$, there are no middle column vertices left once we remove the three distinct ones, and for the cases $i = 2, 3$, the reader is invited to look at Figure~\ref{fig:appendix_graphs_Fi} and verify that the construction can be performed then as well.

    \paragraph*{Excluding the case $k = \infty$:}

    Finally, we need to exclude the case $k = \infty$. In this case, we know that $\Rset$ must be precisely the set of all graphs that are sent to a partition with a part containing all vertices by $\Clust$, from the reasoning in the proof of Lemma~\ref{lemma_exc_and_funct_implies_repr}. But then, for example, the graph $F_1$ will be sent to a partition that has three parts -- one containing all vertices, and two corresponding to the copies of $D$ in $F_1$, while of course $\Clust(F_1)$ has only one part. So we have a contradiction.
\end{proof}

\section{Further examples}

In this section, we record additional interesting examples that do not fit into the main text.

\begin{example}\label{example:equal_parts_different_edgesets}
    There exists a graph $G$ such that $\Pi_2(G)$ has two parts, both of which contain all vertices of $G$, arising from two different connected components of $\Lambda_2(G)$.
\end{example}

\begin{figure}[p]
    \centering
    \includegraphics[width=0.8\textwidth]{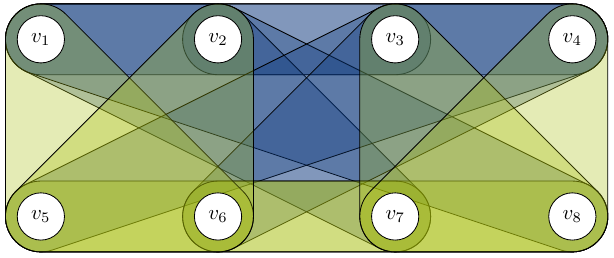}
    \caption{The graph $G$ used in the proof of Example~\ref{example:equal_parts_different_edgesets}.}
    \label{fig:equal_parts_different_edgesets}
\end{figure}

\begin{figure}[p]
    \centering
    \includegraphics[width=0.5\textwidth]{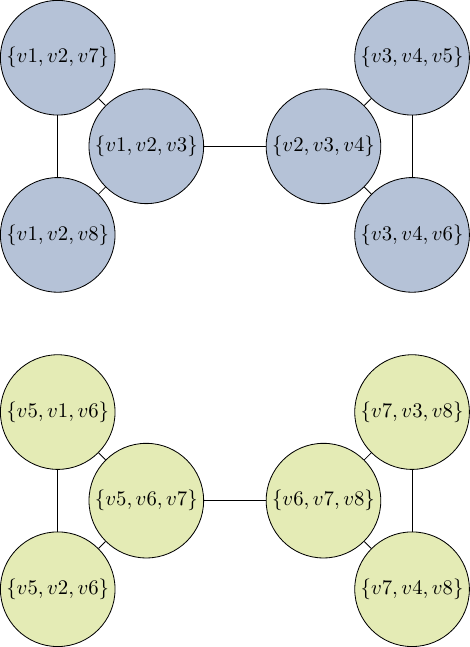}
    \caption{The line graph $\Lambda_2(G)$ of the graph $G$ in Figure~\ref{fig:equal_parts_different_edgesets}. The vertices are coloured according to the colour of the corresponding edge in Figure~\ref{fig:equal_parts_different_edgesets}.}
    \label{fig:equal_parts_different_edgesets_linegraph}
\end{figure}

\begin{proof}
    Consider the graph $G$ as in Figure~\ref{fig:equal_parts_different_edgesets}, with $2$-line graph as in Figure~\ref{fig:equal_parts_different_edgesets_linegraph}.
\end{proof}

\section{Proof of non-functoriality of $\Upsilon$}

If we wanted to turn $\Upsilon$ into a functor, we would need to define it on morphisms in $\Pset$. Now, thinking about how we use it to map from sets of edges to sets of vertices, what we would be doing is defining a map from vertices to vertices which was coherent with the mapping from edges to edges which we had been given. For this to be at all natural, we would want to have that $\Upsilon(f)_*(e) = f(e)$ for all edges $e$, so that we ``get the right thing when we go in the other direction''.

However, this is not possible, essentially because there are morphisms between line graphs that do not arise from morphisms between the original graphs. To see this, consider for example the graphs $G = (\{1,2,3\}, \{\{1,2\}, \{2,3\}\})$ and $H = (\{a,b\}, \{\{a\}, \{b\}\})$ -- $\Lambda_2(G)$ and $\Lambda_2(H)$ are both the graph with two vertices and no edges, and so they are isomorphic, but clearly $H$ and $G$ themselves are not isomorphic. This is essentially the idea of the proof of the following lemma.

\begin{lemma}\label{lemma:Upsilon_not_functorial}
    There is no way of defining $\Upsilon(f)$ on morphisms in $\Pset$ such that $\Upsilon$ becomes a functor, and $\Upsilon(f)_* = f$ for all morphisms $f$ in $\Pset$.
\end{lemma}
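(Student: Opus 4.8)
The plan is to produce one explicit morphism $f$ of $\Pset$ for which the demanded compatibility $\Upsilon(f)_* = f$ cannot be met by \emph{any} set function, so that the conjunction ``$\Upsilon$ is a functor and $\Upsilon(f)_* = f$ for all $f$'' fails already on the naturalness clause alone. The example to use is exactly the one signposted just before the lemma, reorganised so that the clash happens at the level of a single map.

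First I would set $\mathcal{A} = \bigl(\{\{1,2\},\{2,3\}\},\,\emptyset\bigr)$ and $\mathcal{B} = \bigl(\{\{a\},\{b\}\},\,\emptyset\bigr)$, two objects of $\Pset$ whose underlying sets have sets as elements and which carry no parts at all. Because a morphism of $\Pset$ between objects with empty part-sets is just an arbitrary set function, the bijection $f$ with $f(\{1,2\}) = \{a\}$ and $f(\{2,3\}) = \{b\}$ is a legitimate morphism $\mathcal{A} \to \mathcal{B}$. Unwinding the definition of $\upsilon$ gives $\Upsilon(\mathcal{A}) = (\{1,2,3\},\emptyset)$ and $\Upsilon(\mathcal{B}) = (\{a,b\},\emptyset)$, so any purported $\Upsilon(f)$ is a function $\{1,2,3\}\to\{a,b\}$, and ``$\Upsilon(f)_* = f$'' is read, via the inclusions $\{1,2\},\{2,3\}\subseteq\{1,2,3\}$ and $\{a\},\{b\}\subseteq\{a,b\}$, as $\Upsilon(f)_*(x) = f(x)$ for each element $x$ of the underlying set of $\mathcal{A}$.

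The contradiction is then immediate: taking $x = \{1,2\}$ forces $\{\Upsilon(f)(1),\Upsilon(f)(2)\} = \{a\}$, hence $\Upsilon(f)(2) = a$, while taking $x = \{2,3\}$ forces $\{\Upsilon(f)(2),\Upsilon(f)(3)\} = \{b\}$, hence $\Upsilon(f)(2) = b$; since $a \neq b$ no such function exists. A fortiori there is no assignment $f \mapsto \Upsilon(f)$ on all morphisms that is both functorial and satisfies $\Upsilon(f)_* = f$, which is the claim.

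I do not anticipate any serious obstacle; the one point requiring care is purely bookkeeping, namely spelling out the identification under which the equation $\Upsilon(f)_* = f$ is being parsed (elements of $V(\mathcal{A})$ are sets, hence subsets of $V(\Upsilon(\mathcal{A}))$), exactly as in the paragraph preceding the lemma. If one wishes to avoid that identification altogether, an equivalent and even shorter argument is available: equip $\mathcal{A}$ and $\mathcal{B}$ with the single part equal to the whole underlying set; the same bijection $f$ and its inverse are then morphisms, so these two objects are isomorphic in $\Pset$, yet $\Upsilon$ sends them to objects with underlying sets of sizes $3$ and $2$ respectively, which cannot be isomorphic. As every functor preserves isomorphisms, $\Upsilon$ cannot be a functor at all. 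Either route proves the lemma, and I would present the first, since it is the one that actually uses — and hence explains the role of — the hypothesis $\Upsilon(f)_* = f$.
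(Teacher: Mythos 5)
Your proposal is correct, and both of your arguments work; they are close in spirit to the paper's proof but organised differently. The paper uses the same two-element underlying sets $\{\{1,2\},\{2,3\}\}$ and $\{\{a\},\{b\}\}$ but equips them with singleton parts; it then argues that the only morphisms from $\Upsilon(A)$ to $\Upsilon(B)$ are the two constant maps (this is where functoriality is used, to force $\Upsilon(f)$ to be a morphism between the images), and only afterwards contradicts $\Upsilon(f)_* = f$. Your first argument strips out the parts entirely and derives the contradiction directly from the compatibility condition applied to the two overlapping sets $\{1,2\}$ and $\{2,3\}$: the shared element $2$ would have to map to both $a$ and $b$. This is slightly sharper, since it shows that \emph{no} set function $\{1,2,3\}\to\{a,b\}$ satisfies the compatibility condition, so functoriality enters only to pin down the domain and codomain of $\Upsilon(f)$. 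Your alternative argument --- that $\Upsilon$ sends a pair of isomorphic objects of $\Pset$ to objects with underlying sets of different cardinalities, hence cannot preserve isomorphisms --- is a genuinely different and even shorter route, and it proves the stronger statement that no definition on morphisms makes $\Upsilon$ a functor at all, with or without the compatibility clause. Your stated preference for presenting the first argument is reasonable, since it is the one that explains the role of the hypothesis $\Upsilon(f)_* = f$, which is the condition the paper actually cares about when it later reuses $\Upsilon$ inside the definition of $\Pi_k$.
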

    
\begin{proof}
    Suppose, to the contrary, that $\Upsilon$ could be defined on morphisms, and let
    \[
    \begin{aligned}
    A &= \bigl(V_A,P_A\bigr),
    &&
    V_A=\bigl\{\{1,2\},\,\{2,3\}\bigr\},\quad
    P_A=\bigl\{\{\{1,2\}\},\,\{\{2,3\}\}\bigr\},\\
    B &= \bigl(V_B,P_B\bigr),
    &&
    V_B=\bigl\{\{a\},\,\{b\}\bigr\},\quad
    P_B=\bigl\{\{\{a\}\},\,\{\{b\}\}\bigr\}.
    \end{aligned}
    \]
    Define a morphism \(f\colon A\to B\) in \(\Pset\) by
    \[
    f(\{1,2\})=\{a\}, 
    \quad
    f(\{2,3\})=\{b\}.
    \]
    So when we apply \(\Upsilon\) to $A$ and $B$, we get
    \[
    \Upsilon(A)
    =\bigl(\{1,2,3\},\,\{\{1,2\},\{2,3\}\}\bigr),
    \qquad
    \Upsilon(B)
    =\bigl(\{a,b\},\,\{\{a\},\{b\}\}\bigr).
    \]
    
    Now, we see easily that the only morphisms from $\Upsilon(A)$ to $\Upsilon(B)$ are the two constant functions sending all vertices to $a$ or all vertices to $b$. So suppose without loss of generality that we have defined $\Upsilon(f)(x) = a$ for $x = 1,2,3$ -- but then we get $\Upsilon(f)_*(\{2,3\}) = \{a\}$, while $f(\{2,3\}) = \{b\}$. So we have a contradiction -- the opposite choice of $\Upsilon(f)$ of course gives the same contradiction, and so we are done.
\end{proof}

\end{document}